\documentclass[11pt]{amsart}

\usepackage[utf8]{inputenc}
\usepackage[T1]{fontenc}
\usepackage{lmodern}
\usepackage{microtype}

\usepackage{amsmath,amssymb,amsfonts,amsthm,mathtools}
\usepackage{enumitem}
\usepackage[margin=1.15in]{geometry}

\usepackage[colorlinks=true,linkcolor=blue,citecolor=blue,urlcolor=blue]{hyperref}
\usepackage{aliascnt}
\usepackage[nameinlink,capitalize]{cleveref}

\newtheorem{theorem}{Theorem}[section]

\newaliascnt{proposition}{theorem}
\newtheorem{proposition}[proposition]{Proposition}
\aliascntresetthe{proposition}

\newaliascnt{lemma}{theorem}
\newtheorem{lemma}[lemma]{Lemma}
\aliascntresetthe{lemma}

\newaliascnt{corollary}{theorem}
\newtheorem{corollary}[corollary]{Corollary}
\aliascntresetthe{corollary}

\theoremstyle{definition}

\newaliascnt{definition}{theorem}

\aliascntresetthe{definition}

\theoremstyle{remark}

\newaliascnt{remark}{theorem}

\aliascntresetthe{remark}

\crefname{theorem}{Theorem}{Theorems}
\crefname{proposition}{Proposition}{Propositions}
\crefname{lemma}{Lemma}{Lemmas}
\crefname{corollary}{Corollary}{Corollaries}
\crefname{definition}{Definition}{Definitions}
\crefname{remark}{Remark}{Remarks}
\crefname{equation}{equation}{equations}
\Crefname{equation}{Equation}{Equations}

\newcommand{\bbP}{\mathbb P}
\newcommand{\bbE}{\mathbb E}

\newcommand{\one}{\mathbf 1}

\newcommand{\calU}{\mathcal U}
\newcommand{\calY}{\mathcal Y}
\newcommand{\calX}{\mathcal X}

\newcommand{\vol}{\operatorname{vol}}
\newcommand{\rank}{\operatorname{rank}}
\newcommand{\tors}{\operatorname{tors}}
\newcommand{\inj}{\operatorname{inj}}
\newcommand{\Inj}{\operatorname{InjRad}}
\newcommand{\dd}{\mathop{}\!\mathrm{d}}

\title[Sparse Random Covers and First Homology Growth]{Sparse Random Covers and Growth of Torsion in First Homology}
\author{Raz Slutsky}
\address{Merton College, Merton Street, OX1 4JD, Oxford, United Kingdom}
\email{raz.slutsky@maths.ox.ac.uk}

\begin{document}

\begin{abstract}
We construct random open covers of higher-rank locally symmetric spaces using a construction we call scaffolded Poisson processes. Let \(X=G/K\) be a
symmetric space of noncompact type and real rank \(r\ge2\).  We prove a general vanishing theorem for the normalized torsion in first homology along sequences of torsion-free lattices in $G$. In particular, if $G$ is simple, we get
\[
\dfrac{\log |H_1(M_n;\mathbb{Z})_{\tors}|}{\mathrm{vol}(M_n)} \longrightarrow 0
\]
for any sequence of distinct manifolds $M_n = \Gamma_n \backslash X$.
This answers a question of Ab\'ert, Gelander, and Nikolov,
and confirms the degree-one vanishing with trivial integral coefficients predicted by a conjecture of Bergeron and Venkatesh in the higher-rank setting. In addition, we get quantitative bounds with respect to the minimal injectivity radius for both the torsion in first homology and the minimal number of generators of $\Gamma$. Finally, we prove the analogous statements for affine buildings.
\end{abstract}
\maketitle

\section{Introduction}

Let \(X=G/K\) be a symmetric space of noncompact type
without Euclidean factors, where \(G=\operatorname{Isom}(X)^\circ\) is a semisimple Lie group and \(K<G\) 
is a maximal compact subgroup. 
We write \(r=\rank_{\mathbb R}X\) for the real rank and assume \(r\ge2\). We denote by $M=\Gamma\backslash X$ a finite-volume quotient of $X$, where $\Gamma$ is a lattice in $G$. Typical examples include the locally symmetric orbifold
\(
 \mathrm{SL}_3(\mathbb Z)\backslash
 \mathrm{SL}_3(\mathbb R)/\mathrm{SO}(3)
\)
and compact quotients of
\(\mathbb H^2\times\mathbb H^2\). For such a lattice,
we denote \(d(\Gamma)\) to be the minimal size of a
generating set. 

This paper concerns the torsion part of the first homology \(H_1(\Gamma;\mathbb Z)\). While the growth of rational homology of these spaces is relatively well-understood, the integral homology is far more mysterious and the subject of various conjectures, as will be discussed later.

 For a finite-volume quotient
\(M=\Gamma\backslash X\), let \(M_{<L}\) denote the set of points whose injectivity radius is less than \(L\).  We say that a sequence \(M_n = \Gamma_n \backslash X \) Benjamini--Schramm converges to \(X\) if, for every
fixed \(L>0\),
\[
 \frac{\vol((M_n)_{<L})}{\vol(M_n)}\longrightarrow0.
\]

Our main result is the following theorem.

\begin{theorem}\label{thm:intro-bs-h1-torsion}
Let \(M_n=\Gamma_n\backslash X\) be a sequence of finite-volume irreducible
\(X\)-manifolds.  If \(M_n\) BS-converges to \(X\), then
\[
 \frac{\log |H_1(M_n;\mathbb Z)_{\tors}|}{\vol(M_n)}
 \longrightarrow0.
\]
\end{theorem}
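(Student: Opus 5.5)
We aim for a presentation of $\Gamma_n$, or rather a $2$-complex homotopy-equivalent (through dimension $2$) to a model of $M_n$, whose number of cells is $o(\vol(M_n))$ in the thick part and whose relators have bounded length. The torsion bound then follows from a Gabber-type estimate. If $Y$ is a finite $2$-complex with $c_1$ edges and each $2$-cell has boundary length at most $\ell$, the torsion of $H_1(Y;\mathbb Z)$ is the torsion of the cokernel of the boundary matrix $\partial_2$. By Hadamard's inequality, $|H_1(Y;\mathbb Z)_{\tors}|\le \ell^{c_1}$, after choosing a maximal set of linearly independent rows, so at most $c_1$ of them. Hence $\log|H_1(M_n;\mathbb Z)_{\tors}|\le c_1\log \ell$. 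It therefore suffices to build, for each $\varepsilon>0$, a nerve-type complex for $M_n$ with $c_1\le \varepsilon\vol(M_n)$ edges and uniformly bounded $\ell$ once $n$ is large.

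\textbf{Step 1 (covering the thick part sparsely).} Fix a large radius $R$. On the $R$-thick part of $M_n$ we place a random open cover by balls of radius about $R$, with density roughly $\vol/\vol(B_R)$. This is the scaffolded Poisson process announced in the abstract. The nerve of this cover has about $\vol(M_n)/\vol(B_R)$ vertices. Because the balls are contractible and the cover is locally finite with uniformly bounded multiplicity, the nerve has about $C\vol(M_n)/\vol(B_R)$ edges. This is $\varepsilon\vol(M_n)$ once $R$ is large. The nerve theorem identifies its fundamental group with that of the covered region.

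The crux is the $2$-cells. We need $\pi_1$ of the nerve to be presented by triangles and by boundedly many short relations. Simply taking the $2$-skeleton of the nerve already works for the covered region. However, the Hadamard bound needs row lengths bounded independently of $R$, or at least of size $e^{o(\text{cell count})}$; for triangles $\ell=3$, which is fine.

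\textbf{Step 2 (the thin part).} This is where higher rank enters. The thin part has volume $o(\vol(M_n))$ by BS-convergence, but it may be topologically complicated. We need its contribution to $H_1$, through Mayer–Vietoris, to be controlled by its volume. We would use the Margulis lemma together with the rank $\ge 2$ structure: the thin components have virtually nilpotent fundamental groups. In higher rank these are moreover "boundedly generated" via Kazhdan's property (T) or Margulis normal subgroup theorem type arguments, giving $d(\Gamma)$-type bounds linear in the volume of the thin part. We would cover the $\delta$-thin part by a coarser scaffold adapted to it. The scaffold spreads Poisson points along the thin tubes so that the edge count is proportional to the thin volume times a constant depending only on $\delta$, and the thin volume is $o(\vol(M_n))$. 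Irreducibility is used so that BS-convergence applies uniformly and no rank-one factor gives a degenerate cusp geometry.

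\textbf{Main obstacle.} The hardest step is gluing the thick-part nerve to the thin part while keeping the total $2$-complex's $\pi_1$ equal to $\Gamma_n$ and its $1$-cells $o(\vol)$. We must also ensure that $H_1$ torsion is bounded by the gluing complex rather than picking up torsion from the boundary of the thin part. We would try a Mayer–Vietoris argument on $H_1$: $H_1(M_n)$ is a quotient of $H_1(\text{thick})\oplus H_1(\text{thin})$. The torsion of a quotient is bounded by (torsion of the source) times a factor from the rank of the kernel image, and that factor is controlled by generator counts. Together with $\log\ell\cdot\varepsilon$, sending $\varepsilon\to0$ gives the theorem.
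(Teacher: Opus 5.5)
\textbf{There is a genuine gap, and it already appears in Step~1.} Your Gabber/Hadamard reduction to a $2$-complex with few edges and triangular $2$-cells is the right one. The gap is the claim that radius-$R$ balls with centres of density about $1/\vol(B_R)$ have uniformly bounded multiplicity, so that the nerve has about $C\vol(M_n)/\vol(B_R)$ edges. This is false under exponential volume growth. Each such ball meets every ball whose centre lies within $2R$, and on average there are about $v(2R)/v(R)\approx e^{hR}$ of these. The Mecke formula gives an expected edge count of about $\tfrac12\vol(M_n)\,v(2R)/v(R)^2$, vastly more than the number of vertices.

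Your Step~1 also uses nothing about the rank. If it were right, it would give $d(\pi_1S)=o(\operatorname{area}(S))$ for closed hyperbolic surfaces with systole tending to infinity. That contradicts $d(\pi_1S)=2g$, since the nerve of any finite good cover has at least $d(\pi_1)$ edges (\cref{lem:topology-from-nerve}). The actual mechanism is that $v(2R)/v(R)^2\asymp R^{-(r-1)/2}$, which comes from the polynomial factor in $v(s)\asymp e^{hs}s^{(r-1)/2}$ (\cref{lem:volume}). This, not property~(T) or normal-subgroup arguments, is where rank $\ge2$ enters.

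Step~1 also fails to produce a cover: at intensity $1/v(R)$ and radius $R$, each point is missed with probability about $e^{-1}$. The paper fixes this in two ways:
\begin{itemize}
\item It enlarges the radius to $\rho(t)=t+h^{-1}\log\log t+c_X$. This makes the miss probability at most $t^{-Q_0}$ while costing only a factor $(\log t)^2$ in the pair count.
\item It keeps small balls at the missed centres of a fixed-scale scaffold. The resulting mixed edges are controlled by independence of the Poisson counts in $B(x_i,\rho)$ and in the surrounding annulus.
\end{itemize}
You name the scaffolded Poisson process, but the scaffold never appears in your argument.

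\textbf{Step~2 and the gluing step do not work either.} A fixed-scale scaffold gives a good cover only where the injectivity radius is bounded below. Near very short closed geodesics of a compact $M_n$ you need balls of radius at most comparable to the local injectivity radius. Their number is then bounded only by something like $\int_{M_{<A}}\varepsilon_x^{-N}\,\dd x$, with $\varepsilon_x=\min\{1,\inj_x\}$ and $N=\dim X$, not by $C_\delta\vol(M_{<A})$.

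The paper handles the two cases separately:
\begin{itemize}
\item In the compact case it uses the bounded-degree variable-radius cover of Fr\k{a}czyk--Hurtado--Raimbault. It then shows that this weighted integral is $o(\vol(M_n))$ along suitable $t_n\to\infty$, via arithmeticity, Dobrowolski's systole bound and FHR's Theorem~D (\cref{lem:compact-weighted-thin-bs-log}).
\item In the noncompact case it works inside Gelander's core $C_M\subset M_{\ge\beta_X}$ and uses the direct-summand argument of \cref{lem:torsion-from-core-cover}.
\end{itemize}

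Your Mayer--Vietoris gluing rests on a false principle: torsion of a quotient is not controlled by generator counts, as $\mathbb Z\twoheadrightarrow\mathbb Z/N\mathbb Z$ shows. This is precisely the paper's remark that $d(\Gamma)$ gives no control on torsion. The paper avoids gluing altogether by building a single good cover. It consists of Poisson balls of radius $a$ centred in $W=M_{\ge\omega}$, together with the small balls retained at thin or missed centres. Because $2a<\omega$, every intersection containing a Poisson ball lifts to a convex set in $X$. One nerve with triangular $2$-cells then computes $H_1(M_n)$, or a group containing it as a direct summand, and Gabber's lemma is applied once.
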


This gives, to our knowledge, the first general vanishing
theorem for normalized first-homology torsion of cocompact higher-rank
lattices.  The lattices need not be nested or lie in a fixed commensurability
class. This answers a question of Ab\'ert, Gelander and Nikolov \cite[\S8]{AbertGelanderNikolov}, and confirms the degree-one case with integral coefficients of the conjecture of Bergeron and Venkatesh \cite[Conj. 1.3]{BergeronVenkatesh}. We emphasise that this torsion statement does not follow from the corresponding results on the number of generators. While $d(\Gamma)$ controls the dimension of $H_1(\Gamma;\mathbb F)$, it gives no control on the order of $H_1(\Gamma;\mathbb Z)_{\tors}$, as even a one-generated abelian group can have arbitrarily large torsion.

We also note that if \(G\) is simple, the Benjamini--Schramm convergence hypothesis in the above theorem, and in the subsequent results for locally symmetric spaces, is automatic along every sequence of pairwise non-conjugate lattices, and hence along every sequence of pairwise non-isometric quotients, by \cite{AbertBergeronBiringerGelanderNikolovRaimbaultSamet}.

Our next result is a quantitative version in terms of the injectivity radius.
For a manifold \(M\), write \(\inj_x(M)\) for the
injectivity radius at \(x\), and put
\(\Inj(M)=\inf_{x\in M}\inj_x(M)\). For a compact manifold, this is equal to half the length of the shortest closed geodesic, also called the \emph{systole}.

\begin{theorem}\label{thm:main}
There exist constants \(C_X,R_X<\infty\), depending only on \(X\), such that
every torsion-free lattice \(\Gamma<G\), with \(M=\Gamma\backslash X\) and
\(R=\Inj(M)\ge R_X\), satisfies
\[
 \log |H_1(M;\mathbb Z)_{\tors}|
 \le C_X\vol(M)R^{(1-r)/2}(\log R)^2.
\]
The same bound holds for $d(\Gamma)$.
\end{theorem}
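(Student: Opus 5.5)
We prove \cref{thm:main} by realising $M$, up to homotopy in degrees $\le 1$, by a small simplicial complex. We build a good open cover of $M$ whose nerve $\mathcal N$ has few vertices and edges. We then bound the torsion of $H_1$ of the nerve's $2$-skeleton by a Gabber-type estimate. Concretely, we sample a random point set in $M$: a Poisson process of intensity $\lambda$, lifted to a $\Gamma$-invariant process in $X$. We cover $M$ by balls $B(x_i,\rho)$ with $\rho<R$, so that each ball and each finite intersection is convex and embeds in $M$. The nerve theorem then gives $\pi_1(\mathcal N)\cong\Gamma$ and $H_1(\mathcal N;\mathbb Z)\cong H_1(M;\mathbb Z)$, and $d(\Gamma)$ is at most the number of edges minus the rank of a spanning tree.

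The key input is higher rank. A pure Poisson cover of radius $\rho$ needs intensity about $\rho^{-\dim X}\log$-type factors to cover, and the nerve then has $\asymp\vol(M)$ simplices, which is useless. Instead we use a \emph{scaffolded} process. We place points sparsely, at density roughly $e^{-c\rho}$ per unit volume. We also add a thin scaffold of points along flats, or along geodesic segments in maximal flats, so that the balls still cover $M$ and the pairwise intersections remain connected. The point is that in rank $r\ge2$, volume growth in $X$ is exponential but polynomially corrected: $\vol B(x,\rho)\asymp\rho^{(r-1)/2}e^{2\|\rho_X\|\rho}$. This polynomial factor is what produces the saving $R^{(1-r)/2}$. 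We choose $\rho\approx R$ and intensity $\lambda$ so that the expected number of uncovered points is $<1$. A union bound over an $\epsilon$-net of $M$ gives $\lambda\vol B(\rho)\approx \log(\vol(M)/\epsilon)$, up to $\log R$ factors coming from the net. Hence the expected number of vertices is $\lambda\vol(M)\lesssim\vol(M)R^{(1-r)/2}\log R\,e^{-2\|\rho_X\|R}$-normalised. More precisely, after rescaling, the number of vertices is $O(\vol(M)R^{(1-r)/2}\log R)$ per unit $\vol(B(R))$.

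We then bound the number of $1$- and $2$-simplices. The expected degree of a vertex is $\lambda\vol B(2\rho)$, and the ratio $\vol B(2\rho)/\vol B(\rho)$ would be exponentially large. This forces the choice of a cover of balls with varying radii, or the replacement of "intersection" by a Čech complex restricted to edges realised by short geodesics. This is the main obstacle: keeping the edge count and the $2$-cell count comparable to the vertex count up to $\log R$. We plan to use the scaffold to give each vertex only $O(\log R)$ neighbours, namely those joined along scaffold directions. We then show that these edges and the triangles among them already generate $\pi_1$, using the convexity of balls in the injectivity region. Granting this, Gabber's lemma applies: for a CW complex whose $2$-cells have boundary length $\le b$, $\log|H_1^{\tors}|\le(\#2\text{-cells})\log b$. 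Here $b=3$ and the cell count is $O(\vol(M)R^{(1-r)/2}(\log R)^2)$, which yields the torsion bound. For the bound on $d(\Gamma)$, we count edges.
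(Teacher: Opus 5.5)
There is a genuine gap at the covering step, and it is exactly the point the scaffold is designed to handle. You make the Poisson balls alone cover $M$ through a union bound over a net. That forces $\lambda v(\rho)\gtrsim\log\vol(M)$, and this is not a ``$\log R$ factor''. Since a radius-$R$ ball embeds, $\vol(M)\ge v(R)$, so $\log\vol(M)\ge hR-O_X(1)$; moreover $\vol(M)$ is not bounded in terms of $R$ at all. By the Mecke formula, for $2\rho<R$ the expected number of Poisson--Poisson edges is $\tfrac{\lambda^2}{2}\vol(M)v(2\rho)\asymp(\lambda v(\rho))^2\rho^{(1-r)/2}\vol(M)$. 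Your intensity therefore costs a factor of at least $R^2$. This destroys the claimed rate, and the bound is not even $o(\vol(M))$ for $r\le5$.

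The paper never asks the Poisson balls to cover everything. It takes a maximal $1$-separated scaffold, intensity $1/v(t)$, and $\rho(t)=t+h^{-1}\log\log t+c_X$. Then $v(\rho(t))/v(t)\ge Q_0\log t$, so each scaffold centre is missed with probability at most $t^{-Q_0}$, and one simply keeps the radius-$2$ ball at every missed centre. This gives a good cover for \emph{every} realization. The missed centres cost only $O(\vol(M)t^{-Q_0}\log t)$ edges in expectation: bounded scaffold degree handles scaffold--scaffold edges, and independence of the Poisson counts in $B(x_i,\rho)$ and in the annulus out to radius $\rho+4$ handles mixed edges. That is why $(\log R)^2$, not $(\log\vol M)^2$, appears. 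Your ``scaffold along flats'' plays no such role.

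Second, the ``main obstacle'' you identify is not one, and your fix for it is the unproved step. The exponentially large degree $\lambda v(2\rho)$ is harmless because there are only $\lambda\vol(M)$ vertices. What matters is the total edge count, which the two-point Mecke formula bounds by $\tfrac12\vol(M)v(2\rho+4)/v(t)^2$. This is exactly where higher rank enters. The exponentials cancel up to $e^{2h(\rho-t)}\asymp(\log t)^2$, and the polynomial factors give $(2\rho)^{(r-1)/2}/t^{r-1}\asymp t^{(1-r)/2}$. In rank one that polynomial factor is absent and there is no saving.

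Pruning the nerve to $O(\log R)$ ``scaffold-direction'' neighbours and granting that the surviving edges and triangles still generate $\pi_1$ does not control torsion. If a complex merely surjects onto $H_1(M;\mathbb Z)$, its torsion gives no bound on that of $M$; compare $\mathbb Z\to\mathbb Z/n$. You would need $H_1(M;\mathbb Z)$ to be a direct summand of $H_1$ of your complex, which the pruned complex does not provide. Instead, keep the full nerve and apply Gabber's lemma with the number of $1$-cells, $\log|H_1(M;\mathbb Z)_{\tors}|\le E\log\sqrt3$. This is precisely the quantity Mecke controls, whereas you would otherwise need a separate three-point count for $2$-cells.

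Finally, for intersections to lift to convex sets, the sum of two radii must lie below the injectivity radius at the reference centre. That means $2\rho+O(1)<R$, not $\rho<R$, which is why the paper takes $t=R/4$.
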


The essential point is that
\(R^{(1-r)/2}(\log R)^2\to0\) as \(R\to\infty\).  The generator estimate
answers a question of Fr\k{a}czyk, Mellick, and Wilkens up to the logarithmic factor \cite[Question 6]{FraczykMellickWilkens}.

Lapan, Linowitz, and Meyer proved that in cocompact congruence towers, the systole grows logarithmically in the volume
\cite{LapanLinowitzMeyer}.  Hence, for such towers, one obtains an estimate of the form
\[
 \max\bigl\{d(\pi_1M),\log |H_1(M;\mathbb Z)_{\tors}|\bigr\}
 \le C\vol(M)(\log\vol(M))^{(1-r)/2}
       (\log\log\vol(M))^2.
\]

For the minimal number of generators, we can remove the torsion-freeness assumption.

\begin{theorem}\label{thm:intro-bs-orbifold}
Let \(O_n=\Gamma_n\backslash X\) be a sequence of finite-volume quotients by
irreducible lattices, not assumed torsion-free.  If \(O_n\) BS-converges to
\(X\), then
\[
 d(\Gamma_n)=o(\vol(O_n)).
\]
\end{theorem}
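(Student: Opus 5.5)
The plan is to bound $d(\Gamma_n)$ by the number of pieces in a good open cover of $O_n$ whose nerve controls $\pi_1^{\mathrm{orb}}(O_n)=\Gamma_n$. The covers are built with the scaffolded Poisson construction, and the thin part is handled separately. The argument has three steps.

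\emph{Step 1: covering the thick part.} Fix a large parameter $L$ and split $O_n=(O_n)_{<L}\cup (O_n)_{\ge L}$. On the thick part $(O_n)_{\ge L}$, points have a genuine injectivity radius at least $L$, so locally the quotient looks like $X$; near the orbifold locus, where $\Gamma_n$ has torsion fixing points, the local model is instead $F\backslash X$ with $F$ finite. I would run the scaffolded Poisson process on $\Gamma_n$-orbits of the lifted intensity. This should produce, at scale $\rho\ll L$, a random cover of the thick part by convex or star-shaped sets (balls, or Voronoi-type cells around the Poisson points). The key expected output, from the earlier construction, is a quantity $\varepsilon(L)\to 0$ with two properties. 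First, the expected number of cells meeting the thick part is $O(\vol(O_n))$. Second, the expected number of cells needed to \emph{generate} $\pi_1$ is at most $\varepsilon(L)\vol(O_n)$. The second property reflects the fact that in rank $r\ge 2$ most cells are redundant: the relevant contribution decays like $L^{(1-r)/2}(\log L)^2$ as in \cref{thm:main}. I would use that mechanism as a black box. By van Kampen, or more precisely its generalization to groups acting on simply connected spaces (Macbeath's theorem), $\Gamma_n$ is generated by the elements $\gamma$ with $\gamma U\cap U\neq\emptyset$ for a connected fundamental-domain-like union $U$ of lifted cells. So $d$ is bounded by the number of "new" edges in the nerve beyond a spanning tree. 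Torsion elements are accommodated here because Macbeath's lemma needs only a properly discontinuous action and an open connected $U$ with $\Gamma U=X$; stabilizers are automatically included as generators. Their number is bounded by the number of cells meeting the orbifold locus, and this is absorbed in Step 2, since those points have injectivity radius zero and so lie in the thin part.

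\emph{Step 2: the thin part.} The thin part has small relative volume, $\vol((O_n)_{<L})=o(\vol(O_n))$ by BS-convergence. I would cover it crudely using the Margulis lemma and the thick–thin structure available for irreducible lattices in rank $\ge 2$. This is the technique of Gelander / Abert–Gelander–Nikolov: the $1$-neighbourhood of $(O_n)_{<L}$ can be covered by $C_L\cdot\vol\bigl((O_n)_{<L+1}\bigr)$ balls of a fixed small radius, each having virtually nilpotent (in fact finite, or uniformly boundedly generated) local group. Each such ball then contributes a uniformly bounded number of generators, depending only on $X$, by a Zassenhaus/Margulis bound on the generators of discrete subgroups of $G$ fixing a near point. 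Cusps, in the non-uniform case, are handled by Gelander's thin-part analysis for finite-volume quotients. Irreducibility enters there, since there are no rank-one-like cusp pieces of large volume.

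\emph{Step 3: combining.} Summing the two contributions gives
\[
 d(\Gamma_n)\le \varepsilon(L)\,C\vol(O_n)+C_L\vol\bigl((O_n)_{<L+1}\bigr)+C'.
\]
Dividing by $\vol(O_n)$, letting $n\to\infty$ with BS-convergence, and then letting $L\to\infty$ gives $d(\Gamma_n)=o(\vol(O_n))$.

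\emph{Main obstacle.} The delicate point is the interface between the two covers. The nerve of the union must still compute generation, which means Macbeath's lemma must apply to the union $U$, and I need connectivity of the lifted union. Gluing generators across the boundary layer between thin and thick parts must cost only $O(\text{boundary cells})$. The number of boundary cells should be bounded by the volume of a collar of $(O_n)_{<L}$, which is again $o(\vol)$. I would first try to make the Poisson cover subordinate on a slightly shrunken thick region, and then cover the collar entirely by the crude thin-part balls.
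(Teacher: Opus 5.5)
\textbf{There is a genuine gap.} It lies in Step~2 and in the interface you flag yourself.

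Macbeath's lemma needs a \emph{connected} $\Gamma$-invariant set $Y$ and an open connected $U$ with $\Gamma U=Y$. With $Y=X$, you must cover the entire thin part. Your claim that the $1$-neighbourhood of $(O_n)_{<L}$ is covered by $C_L\vol((O_n)_{<L+1})$ balls of a fixed small radius is not available.
\begin{itemize}
\item In the non-uniform case the thin part contains the cusps, which are unbounded. No finite family of fixed-radius balls covers them, and deferring to Gelander's thin-part analysis does not say how they enter Macbeath's lemma.
\item In the compact case, the only way to count such balls is packing. Near very short geodesics, or near orbifold points with large isotropy, fixed-radius balls have volume far below $v(\epsilon)$. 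What packing controls there is a weighted quantity like $\int\varepsilon_x^{-N}$, not $\vol((O_n)_{<L+1})$. This is why the paper's compact \emph{torsion} argument needs the Fr\k{a}czyk--Hurtado--Raimbault variable-radius cover plus arithmetic input (Dobrowolski, their Theorem~D). BS-convergence bounds only the unweighted thin volume.
\item Stabilizers of cells meeting the singular locus can be arbitrarily large. So ``their number is bounded by the number of cells'' already relies on the bounded-generation reduction.
\item The gluing of generators across the thick/thin boundary is left as something you would try, not an argument.
\end{itemize}

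\textbf{The missing idea} is that, for generation, you never need to cover the thin part or go near the singular locus. The paper uses Gelander's core (\cref{thm:gelander-core-orbifold}): a compact $\mathfrak C_O\subset O_{\ge\alpha_X}$ whose full preimage is connected, so $\pi_1(\mathfrak C_O)\twoheadrightarrow\Gamma$.
\begin{itemize}
\item A maximal $\eta$-separated scaffold in $\mathfrak C_O$ gives a connected, bounded-degree lifted graph $\widehat H$ lying over the free part, with disjoint embedded $\eta/2$-balls.
\item Poisson points are sampled only in $O_{\ge\omega(t)}$.
\item Scaffold vertices that are thin ($\inj<A(t)$) or missed are retained with their $H$-edges.
\item Covered vertices adjacent to these are each joined to one Poisson point in their $\rho(t)$-ball, and Poisson points within distance $\sigma(t)$ are joined to each other.
\end{itemize}
Connectivity of $\widehat H$ then propagates to the lifted graph, which is exactly the interface argument you are missing. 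Every vertex and edge lies over the free part, so $\Gamma$ acts freely. Hence \cref{lem:graph-surjection} gives $d(\Gamma)\le|E(\mathcal G_t)|$ with no stabilizers to account for. Thin vertices are counted by packing their embedded $\eta/2$-balls into $O_{<D(t)}$, which is precisely the unweighted thin volume that BS-convergence makes $o(\vol(O_n))$.
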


This extends
\cite{FraczykMellickWilkens} by removing torsion-freeness and thereby settles
the remaining orbifold case of a conjecture of Ab\'ert, Gelander, and Nikolov
\cite[Conj.~3]{AbertGelanderNikolov}.
As an immediate consequence, first-homology growth vanishes simultaneously
over all fields.

\begin{corollary}\label{cor:intro-modp}
Under the hypotheses of \cref{thm:intro-bs-orbifold},
\[
 \sup_{\mathbb F\;\mathrm{a\ field}}
 \frac{\dim_{\mathbb F}H_1(\Gamma_n;\mathbb F)}
      {\vol(\Gamma_n\backslash X)}
 \longrightarrow0.
\]
In particular, for every prime \(p\),
\(\dim_{\mathbb F_p}H_1(\Gamma_n;\mathbb F_p)=
o(\vol(\Gamma_n\backslash X))\).
\end{corollary}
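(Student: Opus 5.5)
The plan is to deduce the corollary from \cref{thm:intro-bs-orbifold} using one purely algebraic inequality: for every field \(\mathbb F\) and every finitely generated group \(\Gamma\),
\[
 \dim_{\mathbb F}H_1(\Gamma;\mathbb F)\le d(\Gamma).
\]
Since the right-hand side does not depend on \(\mathbb F\), taking the supremum over all fields and dividing by \(\vol(\Gamma_n\backslash X)\) gives
\[
 \sup_{\mathbb F}\frac{\dim_{\mathbb F}H_1(\Gamma_n;\mathbb F)}{\vol(\Gamma_n\backslash X)}
 \le\frac{d(\Gamma_n)}{\vol(\Gamma_n\backslash X)}.
\]
By \cref{thm:intro-bs-orbifold} the right-hand side tends to \(0\). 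The statement for \(\mathbb F_p\) is then the special case \(\mathbb F=\mathbb F_p\).

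To prove the inequality, I use the universal coefficient theorem in degree one. It gives
\[
 H_1(\Gamma;\mathbb F)\cong H_1(\Gamma;\mathbb Z)\otimes\mathbb F,
\]
because the correction term \(\operatorname{Tor}(H_0(\Gamma;\mathbb Z),\mathbb F)=\operatorname{Tor}(\mathbb Z,\mathbb F)\) vanishes. Next, \(H_1(\Gamma;\mathbb Z)=\Gamma^{\mathrm{ab}}\) is the image of \(\Gamma\) under a surjection, so it is generated by the images of any \(d(\Gamma)\) generators of \(\Gamma\). Consequently there is a surjection \(\mathbb Z^{d(\Gamma)}\to\Gamma^{\mathrm{ab}}\). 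Tensoring with \(\mathbb F\) is right exact, so it preserves this surjection and yields a surjection \(\mathbb F^{d(\Gamma)}\to H_1(\Gamma;\mathbb F)\). This bounds the dimension by \(d(\Gamma)\).

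The lattices \(\Gamma_n\) are finitely generated, since lattices in semisimple Lie groups are finitely generated. Hence \(d(\Gamma_n)\) is finite and all the quantities above make sense.

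The only point to watch is that no torsion-freeness is needed here. The argument uses only group homology of \(\Gamma_n\) and never the topology of the orbifold, and \cref{thm:intro-bs-orbifold} itself is stated without torsion-freeness. So I expect no real obstacle; the work is contained entirely in \cref{thm:intro-bs-orbifold}, which I take as given.
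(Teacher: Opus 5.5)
Your proposal is correct and follows the paper's proof: both use the field-independent bound \(\dim_{\mathbb F}H_1(\Gamma_n;\mathbb F)\le d(\Gamma_n)\) and then apply \cref{thm:intro-bs-orbifold}. The paper justifies that bound by noting that the images of a generating set span \(H_1(\Gamma_n;\mathbb F)\); your universal-coefficient and right-exactness argument proves the same claim in more detail.
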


Consequently, for torsion-free BS-convergent sequences, the
volume-normalized generator rank, first-homology dimensions over every field,
and logarithmic first-homology torsion all vanish. This proves the
degree-one case of a conjecture of Li, L\"oh, Moraschini, Sauer, and Uschold
for torsion-free lattices \cite[Conj.~1.15]{sauer_etal} in this setting.  

These results fit into the
``below-rank'' phenomenon, according to which invariants of higher-rank
lattices are expected to vanish in all degrees below the rank; see Bader and
Sauer \cite{bader_sauer}. For the number-theoretic significance of torsion in arithmetic
(co)homology, particularly its relation to mod-\(p\) Galois
representations, see
\cite{BergeronECM,ScholzeTorsionCohomology}, and the introduction of \cite{AbertBergeronFraczykGaboriau}.

We now review some related results. For rational Betti numbers,
asymptotic questions of this kind are governed by a powerful approximation
theory.  L\"uck's approximation theorem identifies normalized rational
Betti-number growth in residual towers with \(L^2\)-Betti numbers
\cite{LuckApprox,LuckSurvey}.  In the locally symmetric setting, the work of
Ab\'ert, Bergeron, Biringer, Gelander, Nikolov, Raimbault, and Samet showed that
Benjamini--Schramm convergence is the appropriate geometric analogue and
established corresponding approximation results for normalized rational Betti
numbers \cite{AbertBergeronBiringerGelanderNikolovRaimbaultSamet}.  By
contrast, no comparably general approximation theory is known for integral torsion.  In arithmetic settings,
Bergeron and Venkatesh formulated a precise conjectural framework for the asymptotic growth
of homology torsion in congruence towers \cite{BergeronVenkatesh}.  In degree
one and higher rank, their conjectural picture predicts vanishing of normalized
logarithmic torsion in the relevant towers, but general geometric mechanisms
proving such vanishing have been scarce.

These questions fit into a broader theme: the volume of a
locally symmetric space should control its topological and algebraic
complexity.  By now, this is a long-established phenomenon in various settings; see \cite{BaderGelanderSauer,
GromovBetti,
FraczykModTwo,
GelanderSlutsky, 
GelanderSlutsky2,
KaponSlutsky}.

Gelander proved that, for fixed \(X\), the minimal number of generators of an
irreducible lattice satisfies
\[
d(\Gamma)=O_X\bigl(\vol(\Gamma\backslash X)\bigr)
\]
\cite{GelanderRank}.  In higher rank, Ab\'ert, Gelander, and
Nikolov conjectured that generator growth should in fact be sublinear
\cite[Conj.~3]{AbertGelanderNikolov}. They proved the vanishing of rank gradient for sequences of subgroups of certain
higher-rank lattices which they call right-angled lattices.  In the nonuniform
case, writing \(v=\vol(\Gamma\backslash X)\), Lubotzky and the author proved
the stronger bound \(d(\Gamma)=O_X(\log v)\)
\cite{LubotzkySlutsky}.  More recently, in a breakthrough of Fr\k{a}czyk, Mellick, and Wilkens, they 
proved sublinear growth of the number of generators for torsion-free
Benjamini--Schramm convergent sequences of lattices, using Poisson--Voronoi
tessellations on the symmetric space \cite{FraczykMellickWilkens}.  Their work
also settled the fixed-price conjecture for higher-rank lattices, see also our recent generalization \cite{slutsky2026product}.  Since
\(d(\Gamma)\) bounds \(\dim_{\mathbb F}H_1(\Gamma;\mathbb F)\) for every field
\(\mathbb F\), it also gives the corresponding vanishing of first-homology
growth over arbitrary fields.  See also the work of Ab\'ert and Mellick
connecting cost, point processes, and rank gradient \cite{AbertMellick}.

For first-homology torsion, Ab\'ert, Gelander, and Nikolov also proved
vanishing along sequences of subgroups of right-angled lattices
\cite{AbertGelanderNikolov}. We note that if \(\Gamma\) is an arithmetic lattice with the
generalized congruence subgroup property, then for subgroups of this fixed lattice, torsion in first homology grows at most polynomially in the index, see
\cite[\S5.1]{AbertGelanderNikolov}. For homology in positive characteristic,
Fr\k{a}czyk proved a quantitative sublinear bound for first mod-\(2\) homology
of torsion-free higher-rank lattices \cite{FraczykModTwo}.  A major result in
the nonuniform arithmetic setting is the work of Ab\'ert, Bergeron,
Fr\k{a}czyk, and Gaboriau.  Using a homotopical method called effective
rebuilding, they proved vanishing of homology over finite fields and of
integral homology torsion in all degrees less than the rank,
for Farber sequences of finite-index subgroups of a fixed nonuniform arithmetic
lattice.  They also obtained quantitative estimates for principal
congruence subgroups \cite{AbertBergeronFraczykGaboriau}.

Finally, we prove an affine-building analogue of our main results.

\begin{theorem}\label{intro:buildings}
Let \(\mathcal B\) be a locally finite thick Euclidean building of Euclidean rank
\(r\ge2\)  and let \(Y_n=\Gamma_n\backslash\mathcal B\) be compact quotients by torsion-free
uniform lattices \(\Gamma_n<\operatorname{Aut}(\mathcal B)\). If \(Y_n\) BS-converges to \(\mathcal B\), then
\[
d(\Gamma_n)=o(\mu(Y_n)),\qquad
        \log |H_1(\Gamma_n;\mathbb Z)_{\tors}|=o(\mu(Y_n)).
\]
\end{theorem}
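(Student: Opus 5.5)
We will prove \cref{intro:buildings} by transporting the locally-symmetric argument to the building, with the Haar-type measure \(\mu\) on \(Y_n\) in place of Riemannian volume. The engine is a general topological lemma. Suppose a compact space \(Y\) with \(\pi_1(Y)=\Gamma\) has an open cover \(\mathcal U\) whose sets and pairwise and triple intersections are path connected, and whose sets lift homeomorphically to \(\mathcal B\). Then \(\Gamma\) is generated by roughly the number of sets plus the number of pairwise intersections: the generators are the edges of the nerve, and the relations come from triangles. In the same setting, \(H_1(\Gamma;\mathbb Z)_{\tors}\) is a subquotient of the first homology of a chain complex of nerve type, whose ranks and boundary-matrix norms are controlled by the cover.

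Concretely, \(H_1\) of the nerve 2-skeleton computes \(H_1(\Gamma)\) once the cover sets are simply connected, or at least \(\pi_1\)-trivial in \(Y\). By Gabber's lemma, or the Hadamard bound \(\log|\tors|\le\sum\log\|\text{columns}\|\) on the boundary matrix \(\partial_2\), the torsion is at most \(\#\{\text{2-simplices}\}\cdot\log(\text{max multiplicity})\). So both invariants are bounded by a constant times the number of low-dimensional nerve simplices involving ``bad'' sets, plus an error. The usual trick is then to collapse: a large connected union of cover sets that forms a contractible or simply connected region contributes nothing new. The count therefore reduces to the number of cells meeting the boundary of such regions.

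The cover will come from the scaffolded Poisson process on \(\mathcal B\). Take a Poisson process of intensity \(\lambda\) on the vertex set, or equivalently on \(\mathcal B\) with the natural measure, invariant under \(\operatorname{Aut}(\mathcal B)\). Take its Voronoi tessellation, thickened by a small margin, and use the scaffolding to make cells large and ideally contractible. On a ball of radius \(L\) in \(Y_n\) where the injectivity radius exceeds \(L\), the process lifts and behaves like the process on \(\mathcal B\). The key input, the analogue of the Fr\k{a}czyk--Mellick--Wilkens phenomenon, is that in rank \(r\ge2\) the low-intensity Poisson--Voronoi tessellation has cells that are pairwise adjacent with positive probability, so the ideal tessellation has cost 1. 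For buildings this means the following. In an apartment \(\mathbb R^r\), the Busemann-function behaviour of the Voronoi boundaries as \(\lambda\to0\) makes the expected number of neighbouring pairs per unit measure, beyond those of a tree-like structure, \(o(\lambda)\). Equivalently, the expected number of nerve 1- and 2-simplices per unit mass, minus the cell count, tends to \(0\). I would prove this using the retraction onto apartments and the thickness and local finiteness of \(\mathcal B\), which make the distance statistics mimic those of the symmetric space.

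The argument then proceeds in three steps. (i) Fix \(\lambda\) and \(L=L(\lambda)\) such that cells of diameter greater than \(L\) have small density. (ii) By BS-convergence, all but \(o(\mu(Y_n))\) of \(Y_n\) has injectivity radius greater than \(L\). On the thin part, use a crude cover by small balls, bounded via the compactness bound \(d(\Gamma)\), \(\log|\tors|=O(\mu)\) applied only to that portion. Its contribution is \(O(\mu((Y_n)_{<L}))+\text{(boundary cells)}\), which is \(o(\mu(Y_n))\). (iii) On the thick part, take expectations of the nerve counts via the mass transport principle, giving \(\epsilon(\lambda)\mu(Y_n)\). Let \(n\to\infty\), then \(\lambda\to0\).

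I expect the main obstacle to be step (iii): establishing the vanishing of the excess of nerve simplices over cells for the Poisson--Voronoi tessellation of a building, together with the required path-connectedness and simple connectivity of cells and their intersections. The building is not a manifold, and Voronoi cells in it are not obviously contractible. My first attempt will be to replace Voronoi cells by unions of chambers, which gives a subcomplex cover, and to show that these unions are \(\mathrm{CAT}(0)\)-star-shaped with respect to their nucleus, hence contractible.
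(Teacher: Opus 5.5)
There is a genuine gap at step (iii), and that step carries essentially all of the content. The input you cite from Fr\k{a}czyk--Mellick--Wilkens (cells of the ideal tessellation are pairwise adjacent, hence cost $1$) produces a \emph{sparse connected subgraph} of the adjacency graph. That bounds generators, but it says nothing about the full nerve $2$-skeleton, which you need for relations and hence for torsion. This is exactly why the torsion statement does not follow from rank bounds.

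The cited input also points the wrong way for nerve counts. Pairwise adjacency in the limit is the opposite of tree-like behaviour, so your first formulation (edges minus cells per unit mass is $o(\lambda)$) should not be expected to hold. It certainly fails for the ball version of the construction. There, each Poisson ball of radius $\approx\rho_{\mathcal B}(t)$ meets on average on the order of $v(2\rho_{\mathcal B}(t))/v(t)\asymp e^{h_{\mathcal B}t}(\log t)^2\to\infty$ others.

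Your ``equivalent'' second formulation (nerve simplices per unit mass tend to $0$) is a different statement. Its edge part is what is actually needed, but you give no mechanism for it; ``retraction onto apartments'' is not an argument. You also do not need to count $2$-simplices at all, since Gabber's lemma bounds $\log|H_1(K;\mathbb Z)_{\tors}|$ by $n_1\log L$, where $n_1$ is the number of $1$-cells.

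Two further steps are left open. Contractibility of thickened Voronoi cells, and of their pairwise and triple intersections, in a building is not established. The thin-part step (``apply the $O(\mu)$ bound to that portion'') is not an argument as stated: a global bound cannot be applied to a piece. You need an explicit cover glued to the thick part and a count of the interface edges.

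The missing idea is a direct quantitative count. Its only geometric input is the ball-volume asymptotics $v_\pm(s)\asymp e^{h_{\mathcal B}s}s^{(r-1)/2}$ (Parkinson for irreducible factors of rank at least $2$, quasi-transitive trees in rank one, then the $\ell^2$-product). The paper replaces Voronoi cells by metric balls. Take Poisson intensity $v_+(t)^{-1}$ and ball radius $\rho_{\mathcal B}(t)+O(1)$, where $\rho_{\mathcal B}(t)=t+h_{\mathcal B}^{-1}\log\log t+c_{\mathcal B}$. Then a scaffold centre is missed with probability at most any prescribed negative power of $t$. The Mecke formula bounds Poisson--Poisson edges per unit mass by $v_+(2\rho_{\mathcal B}(t)+O(1))/v_+(t)^2\ll t^{(1-r)/2}(\log t)^2\to0$. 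This tends to zero even though cell degrees blow up, because the cell density $v_+(t)^{-1}$ is tiny. The factor $s^{(r-1)/2}$ is precisely where $r\ge2$ enters; in rank one this ratio would be $\asymp(\log t)^2$.

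Goodness of the cover is automatic, because metric balls in the $\mathrm{CAT}(0)$ space $\mathcal B$ are convex. Once radii sum below the injectivity radius at a reference centre, every nonempty intersection lifts to a convex, hence contractible, set.

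The thin part is handled by retaining fixed small scaffold balls at exceptional centres. This is legitimate because Bridson's discreteness of translation lengths for cellular isometries gives $\Inj(Y)\ge\varepsilon_{\mathcal B}$ uniformly for torsion-free quotients. Each exceptional centre costs only $O(\log t)$ mixed edges, since $v_+(\rho_{\mathcal B}(t)+O(1))/v_+(t)\ll\log t$. BS-convergence together with a diagonal choice $t_n\to\infty$ then finishes the proof.
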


This applies both to semisimple groups over local fields and to exotic affine buildings. Quantitative versions also apply; see \Cref{sec:affine-buildings}.

For Bruhat--Tits buildings, the BS-convergence assumption is automatic in
many higher-rank algebraic situations.  More precisely, the results of
Gelander and Levit imply the BS-convergence of pairwise nonconjugate irreducible lattices in
 non-Archimedean semisimple analytic groups with property~\((T)\) and
rank at least two
\cite{GelanderLevit}.
As far as we are aware, except for the number of generators in products of trees in \cite{FraczykMellickWilkens}, and the $\mathbb{F}_2$-betti numbers in \cite{fraczyk2022growth}, this appears to be new in all other cases.

\paragraph{\textbf{Overview of the Proof}} The proofs are inspired by the work of Fr\k{a}czyk, Mellick and Wilkens,
together with constructions of Gelander. Our goal is to construct an open cover of the manifold by contractible balls, such that the total number of pairwise intersections is sub-linear in the volume. This gives an efficient topological model for the manifold in degree one by taking the nerve of such a cover, see \cref{lem:topology-from-nerve}. To do so, we start with a deterministic maximal 1-separated set of points on the manifold, which we call the scaffold, and then take the set of balls of radius $2$ around them. This covers the entire manifold and detects the topology, but it has
linearly many intersections in terms of the volume. Therefore, we would like to retain only a small proportion of these balls. To do so, we sample a random set of points which is sparse, on average, using a Poisson point process, and take large balls around these random points. The expected number of nearby pairs of the random points is small.
By itself, however, it may miss parts of the manifold. In these missed regions, we add the small, radius-2 balls around the scaffold. We thus get a combination of the good properties of the two covers:  it covers the entire manifold, and also has sub-linearly many intersections in terms of the volume.

The rest of the argument is designed to make this idea work. There is a tension between the intensity of the point process, which controls how sparse the random points are, and the radius of the large balls around them. These need to be chosen in such a way that the missed scaffold points are rare enough, while the number of intersections between the large balls is kept small. To control this, we use volume estimates of balls in higher rank symmetric spaces. Interestingly, this is the only geometric input in the proof.

For the BS-convergent theorems, the Poisson process is sampled only in a
thick region, while the thin part is handled
deterministically. BS convergence makes the contribution of the thin part negligible.
For this, we use results of
Fr\k{a}czyk--Hurtado--Raimbault in the compact case \cite{FraczykHurtadoRaimbaultTopComplexity}, and of Gelander in the noncompact case
\cite{GelanderHomotopy}, which allow us to estimate this contribution.

\paragraph{\textbf{Organization of the paper.}}
In \cref{sec:preliminaries} we collect the ball-volume estimates, Poisson
process facts, and topological tools used throughout.  In
\cref{sec:thick-case} we construct the scaffolded Poisson cover in the
large-injectivity-radius setting and prove \cref{thm:main}. These two sections are the heart of the proof.  In
\cref{sec:bs-torsion-manifolds} we adapt the construction to thick regions of
BS-convergent manifolds and prove \cref{thm:intro-bs-h1-torsion}, treating the
noncompact and compact cases separately.  In \cref{sec:core-scaffolds} we
construct a bounded-degree scaffold for a general lattice,
and in \cref{sec:bs-rank} we use it to prove the orbifold rank theorem, 
\cref{thm:intro-bs-orbifold} and \cref{cor:intro-modp}.  Finally,
\cref{sec:affine-buildings} establishes the corresponding results for affine
buildings.

\paragraph{\textbf{Acknowledgements.}}
I am very grateful to Miko{\l}aj Fr\k{a}czyk, Gil Goffer, Ido Grayevsky, Elyasheev Leibtag,
Konstantin Recke and Roman Sauer for reading and commenting on earlier
versions of this paper.  I also thank Shaked Bader and Lawk Mineh for useful
conversations.  Finally, I thank Tsachik Gelander for his continued support and
mentorship.

\paragraph{\textbf{Statement on AI usage.} ChatGPT 5.5 was used to locate certain references related to the affine building case, to assist in simplifying the proof of Lemma \ref{lem:log-radii}, and to suggest edits for clarity. All mathematical ideas and arguments are the author’s own.}

\section{Preliminaries}\label{sec:preliminaries}

Fix a basepoint \(o\in X\) and write
\(v(s)=\vol B_X(o,s)\).  By homogeneity, this is independent of \(o\).  Put
\(m=r-1\).  If \(Q=\Gamma\backslash X\) is a manifold or orbifold quotient,
then \(\vol B_Q(x,s)\le v(s)\) for every \(x\in Q\); equality holds whenever
the radius-\(s\) ball at \(x\) is embedded.  All logarithms are natural.

\subsection{Volume growth and the choice of radius}

\begin{lemma}[Ball-volume asymptotics]\label{lem:volume}
There are constants \(a_X,A_X,h>0\) such that, for every \(s\ge1\),
\[
 a_X e^{hs}s^{m/2}\le v(s)\le A_X e^{hs}s^{m/2}.
\]
Here \(h\) is the volume entropy of \(X\).
\end{lemma}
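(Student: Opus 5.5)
The plan is to compute \(v(s)\) directly in polar coordinates coming from the Cartan decomposition \(G=K\exp(\overline{\mathfrak a^+})K\). Up to a normalizing constant \(c_X\), the Riemannian volume in these coordinates is
\[
 v(s)=c_X\int_{\{H\in\overline{\mathfrak a^+}:\ |H|\le s\}} J(H)\,\dd H,
 \qquad
 J(H)=\prod_{\alpha\in\Sigma^+}\sinh(\alpha(H))^{m_\alpha},
\]
where \(\Sigma^+\) is the set of positive restricted roots and \(m_\alpha\) their multiplicities. Write \(\rho=\tfrac12\sum_{\alpha} m_\alpha\alpha\). On the part of the cone where every \(\alpha(H)\) is large, \(J(H)\) is comparable to \(e^{2\rho(H)}\). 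In general we only have the upper bound \(J(H)\le e^{2\rho(H)}\), together with a lower bound \(J(H)\gtrsim e^{2\rho(H)}\) on any subcone \(\{\alpha(H)\ge 1\ \forall\alpha\}\). So up to constants the problem becomes estimating
\[
 I(s)=\int_{\{|H|\le s\}\cap \overline{\mathfrak a^+}} e^{2\rho(H)}\dd H .
\]

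Next I set \(h=2|\rho|\) and identify it with the volume entropy. The linear functional \(2\rho\) restricted to the ball of radius \(s\) is maximized at \(H_\rho=s\rho/|\rho|\). The vector \(\rho\) lies in the open positive Weyl chamber, since \(\langle\rho,\alpha_i\rangle>0\) for every simple root \(\alpha_i\). Hence a whole neighbourhood of the ray through \(\rho\) lies inside \(\mathfrak a^+\), and there all \(\alpha(H)\gtrsim s\). This gives both the upper and the lower bound with the same integrand \(e^{2\rho(H)}\).

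Now I evaluate \(I(s)\) with coordinates \(H=t\hat\rho+y\), where \(\hat\rho=\rho/|\rho|\), \(t\in\mathbb R\) and \(y\perp\rho\) with \(y\in\mathbb R^{m}\), \(m=r-1\). On the ball, the constraint is \(t\le\sqrt{s^2-|y|^2}\approx s-|y|^2/(2s)\), and the integrand is \(e^{ht}\). Integrating in \(t\) first gives roughly \(h^{-1}e^{hs}e^{-h|y|^2/(2s)}\). The remaining Gaussian integral in \(y\) over \(\mathbb R^{m}\) contributes \((2\pi s/h)^{m/2}\asymp s^{m/2}\). This yields \(I(s)\asymp e^{hs}s^{m/2}\).

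For the upper bound I integrate over the full half-space slab and use \(J\le e^{2\rho}\). For the lower bound I restrict to the region \(|y|\le\sqrt s\), \(t\in[s-2,\,s-1]\), intersected with the ball. For large \(s\) this region sits deep inside the chamber, since \(\alpha(H)\ge c s\) for every root, so \(J\ge\tfrac12 e^{2\rho(H)}\) there. Its \(y\)-cross-section has measure \(\asymp s^{m/2}\), which gives the matching lower bound. For \(s\) in a compact range \([1,s_0]\), both sides are continuous and positive, so the constants can be adjusted to cover it.

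The main point needing care is the upper bound near the walls of the chamber, where the region of integration is cut off by \(\overline{\mathfrak a^+}\) rather than by the ball. This causes no difficulty because \(J\le e^{2\rho}\) holds everywhere and we simply enlarge the domain to all of \(\mathfrak a\). The Gaussian computation then already controls the whole integral. Finally, \(h=2|\rho|\) equals \(\lim s^{-1}\log v(s)\), which is the volume entropy.
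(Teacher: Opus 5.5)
Your argument is correct, but it takes a different route from the paper. The paper does not prove the estimate directly: it quotes Leuzinger's Proposition~4, \(v(s)\asymp_X s^{(r-1)/2}e^{2\|\rho_X\|s}\) for \(s\ge1\), and reads off \(h=2\|\rho_X\|\). You instead derive it from the Cartan polar-coordinate formula by a Laplace-type computation.

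\emph{Upper bound.} The bound \(\sinh x\le e^x/2\) lets you replace \(J\) by \(e^{2\rho(H)}\) and enlarge the domain to the whole ball in \(\mathfrak a\). There \(\sqrt{s^2-|y|^2}\le s-|y|^2/(2s)\) holds exactly, so your ``\(\approx\)'' can be written as ``\(\le\)''. This reduces everything to a Gaussian integral over \(\rho^\perp\cong\mathbb R^{r-1}\).

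\emph{Lower bound.} You use that \(\rho\) is regular. The box \(t\in[s-2,s-1]\), \(|y|\le\sqrt s\) lies in the ball, since \((s-1)^2+s\le s^2\), and for large \(s\) it lies deep inside \(\mathfrak a^+\).

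One small slip: on that box \(J(H)\) is comparable to \(2^{-\sum_\alpha m_\alpha}e^{2\rho(H)}\), not to \(\tfrac12 e^{2\rho(H)}\). For instance, \(\sinh x\ge e^x/4\) for \(x\ge1\) gives \(J(H)\ge 4^{-\sum_\alpha m_\alpha}e^{2\rho(H)}\), which is all you need.

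What your route buys is self-containedness and transparency. It shows that the factor \(s^{(r-1)/2}\) comes from Gaussian fluctuations transverse to the \(\rho\)-direction. It also shows that the Weyl chamber walls play no role, precisely because \(\rho\) lies in the open chamber. The citation is simply shorter.
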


\begin{proof}
This is standard.  In the notation of
\cite[Proposition~4]{LeuzingerT}, if \(\rho_X\) is the half-sum of the positive
restricted roots, counted with multiplicity, then
\(v(s)\asymp_X s^{(r-1)/2}e^{2\|\rho_X\|s}\) for \(s\ge1\).  Thus one may
take \(h=2\|\rho_X\|\), and \(m=r-1\).
\end{proof}

Set \(Q_0=m/2+4\).  Choose once and for all \(c_X\ge0\) so large that
\((a_X/A_X)e^{hc_X}\ge Q_0\), and, for \(t\ge e\), define
\[
 \rho(t)=t+\frac1h\log\log t+c_X.
\]
The same function \(\rho\) will be used throughout the paper.

\begin{lemma}\label{lem:log-radii}
For every \(B_0\ge0\), there are constants
\(T_{X,B_0},C_{X,B_0}<\infty\), depending only on \(X\) and \(B_0\), such
that, for every \(t\ge T_{X,B_0}\) and every \(B\in[0,B_0]\),
\begin{align}
 \exp\left(-\frac{v(\rho(t))}{v(t)}\right)
 &\le t^{-Q_0},                                      \label{eq:miss-prob}\\
 \frac{v(\rho(t)+B)}{v(t)}
 &\le C_{X,B_0}\log t,                               \label{eq:rho-volume}\\
 \frac{v(2\rho(t)+B)}{v(t)^2}
 &\le C_{X,B_0}t^{-m/2}(\log t)^2.                   \label{eq:prox-volume}
\end{align}
\end{lemma}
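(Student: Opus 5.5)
The plan is to derive all three estimates directly from the two-sided bounds of \cref{lem:volume}, which hold once the radii are at least \(1\) (true for \(t\ge e\)). Write \(\delta(t)=\frac1h\log\log t+c_X\), so that \(\rho(t)=t+\delta(t)\), and note that \(\rho(t)/t\to1\) as \(t\to\infty\). Every ratio below will then be a ratio of terms \(e^{hs}s^{m/2}\). The exponential parts produce powers of \(\log t\), and the polynomial parts produce factors \((s/t)^{m/2}\), which tend to constants.

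For \eqref{eq:miss-prob} I use the lower bound for \(v(\rho(t))\) and the upper bound for \(v(t)\):
\[
\frac{v(\rho(t))}{v(t)}\ge \frac{a_X}{A_X}e^{h\delta(t)}\Bigl(\frac{\rho(t)}{t}\Bigr)^{m/2}\ge \frac{a_X}{A_X}e^{hc_X}\log t\ge Q_0\log t.
\]
Here \((\rho/t)^{m/2}\ge1\) because \(\rho(t)\ge t\), and the last step is exactly the defining choice of \(c_X\). Exponentiating the negative of this gives \(\exp(-v(\rho(t))/v(t))\le e^{-Q_0\log t}=t^{-Q_0}\) for every \(t\ge e\). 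This is why \(c_X\) was chosen as it was.

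For \eqref{eq:rho-volume} I use the upper bound for \(v(\rho(t)+B)\) and the lower bound for \(v(t)\):
\[
\frac{v(\rho(t)+B)}{v(t)}\le \frac{A_X}{a_X}e^{hB_0}e^{hc_X}\log t\Bigl(\frac{t+\delta(t)+B_0}{t}\Bigr)^{m/2}.
\]
The last factor is at most \(2^{m/2}\) once \(t\) is large enough that \(\delta(t)+B_0\le t\), and this threshold depends only on \(X\) and \(B_0\). For \eqref{eq:prox-volume} the same two bounds give
\[
\frac{v(2\rho(t)+B)}{v(t)^2}\le \frac{A_X}{a_X^2}e^{h(2\delta(t)+B)}\frac{(2\rho(t)+B)^{m/2}}{t^{m}} .
\]
Since \(e^{2h\delta(t)}=e^{2hc_X}(\log t)^2\) and \((2\rho(t)+B_0)^{m/2}\le 4^{m/2}t^{m/2}\) for large \(t\), the right-hand side is at most a constant times \((\log t)^2 t^{-m/2}\). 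Finally, I take \(T_{X,B_0}\) to be the maximum of \(e\) and these thresholds, and \(C_{X,B_0}\) to be the maximum of the constants obtained. All bounds are uniform in \(B\in[0,B_0]\) because \(B\) enters only through the monotone factors \(e^{hB}\le e^{hB_0}\) and \(s+B\le s+B_0\).

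The only point needing care is bookkeeping rather than a real obstacle. The polynomial correction \(s^{m/2}\) has to be controlled, and this uses \(\delta(t)=o(t)\). One must also keep straight which side of \cref{lem:volume} is used in the numerator and which in the denominator, so that the constant in \eqref{eq:miss-prob} is precisely \(\tfrac{a_X}{A_X}e^{hc_X}\ge Q_0\), with no extra \(t\)-dependent threshold.
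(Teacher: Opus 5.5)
Your proof is correct and essentially identical to the paper's. Both arguments compare the two-sided bounds of \cref{lem:volume}, use $e^{h(\rho(t)-t)}=e^{hc_X}\log t$ together with the choice of $c_X$ to get \eqref{eq:miss-prob} for all $t\ge e$, and control the polynomial factors by a threshold on $t$ depending only on $X$ and $B_0$; the only difference is that the paper bounds $2\rho(t)+B\le 3t$ where you use $4t$, which changes nothing.
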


\begin{proof}
Choose \(T_{X,B_0}\ge e\) so large that, for every
\(t\ge T_{X,B_0}\),
\[
 \frac1h\log\log t+c_X+B_0\le t,
 \qquad
 2\left(\frac1h\log\log t+c_X\right)+B_0\le t.
\]
Because \(c_X\ge0\) and \(t\ge e\), the definition gives \(\rho(t)\ge t\).
The choice of \(T_{X,B_0}\) also gives
\(\rho(t)+B\le2t\) and \(2\rho(t)+B\le3t\), uniformly for
\(B\in[0,B_0]\).

Since \(e^{h(\rho(t)-t)}=e^{hc_X}\log t\), \cref{lem:volume} gives
\[
 \frac{v(\rho(t))}{v(t)}
 \ge \frac{a_X}{A_X}e^{h(\rho(t)-t)}
      \left(\frac{\rho(t)}{t}\right)^{m/2}
 \ge Q_0\log t.
\]
This proves \eqref{eq:miss-prob}.  The same comparison gives
\[
 \frac{v(\rho(t)+B)}{v(t)}
 \le \frac{A_X}{a_X}e^{h(c_X+B_0)}2^{m/2}\log t
\]
and
\[
 \frac{v(2\rho(t)+B)}{v(t)^2}
 \le \frac{A_X}{a_X^2}e^{h(2c_X+B_0)}3^{m/2}
       t^{-m/2}(\log t)^2.
\]
Taking \(C_{X,B_0}\) to be the larger of the two displayed coefficients
proves the remaining assertions.
\end{proof}

Whenever \(B_0\) is fixed in terms of \(X\), we absorb
\(T_{X,B_0}\) and \(C_{X,B_0}\) into constants denoted by \(T_X\) and
\(C_X\).

We shall repeatedly use the following elementary lifting observation.  Let
\(M=\Gamma\backslash X\) be a manifold, and suppose that a finite family of
embedded balls \(B_M(c_j,r_j)\), \(0\le j\le k\), has nonempty intersection.
If
\begin{equation}\label{eq:lifting}
 \inj_{c_0}(M)>r_0+r_j\qquad\text{for every }j,
\end{equation}
then, after choosing a lift of \(c_0\), there is a unique compatible lift of
each of the other balls, and the intersection downstairs is isometric to the
intersection of these lifted balls in \(X\).  Since \(X\) is a Hadamard
manifold and its metric balls are convex, the intersection is contractible.

\subsection{Poisson point processes}

We use Poisson point processes with respect to Riemannian volume. Let \(Y\) be a finite-volume Borel subset of a
manifold or orbifold, and let \(\lambda>0\).  A Poisson point process
\(\calY\) on \(Y\) of intensity \(\lambda\) is characterized by the property
that
\[
 N_{\calY}(A):=\#(\calY\cap A)
\]
has Poisson distribution with mean \(\lambda\vol(A)\) for every Borel set
\(A\subset Y\), and the random variables
\(N_{\calY}(A_1)\) and $N_{\calY}(A_2)$ are independent whenever the sets
\(A_1,A_2\) are disjoint.  The process is finite and simple
almost surely.  In our settings, these processes are very easy to describe. One may first sample
\(N\sim\operatorname{Pois}(\lambda\vol(Y))\) and then, conditional on \(N\),
sample \(N\) points independently according to the normalized volume on \(Y\).

The following is the two-point case of the multivariate Mecke formula
\cite[Theorem~4.4]{LastPenrose}.

\begin{lemma}[Mecke formula]\label{lem:mecke}
Let \(\calY\) be a Poisson point process of intensity \(\lambda\) on a
finite-volume Borel set \(Y\).  If
\(f:Y\times Y\to[0,\infty)\) is measurable and symmetric, then
\[
 \bbE\sum_{\{y,z\}\subset\calY} f(y,z)
 =\frac{\lambda^2}{2}\int_Y\int_Y f(y,z)\,\dd y\,\dd z,
\]
where the sum is over unordered pairs of distinct points of \(\calY\).
\end{lemma}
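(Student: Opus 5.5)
The two-point Mecke formula follows from the general (multivariate) Mecke formula in \cite[Theorem~4.4]{LastPenrose}. Still, I would give a direct proof using the explicit description of \(\calY\) above: first sample \(N\sim\operatorname{Pois}(\lambda\vol(Y))\), then draw \(N\) i.i.d.\ points with law \(\dd y/\vol(Y)\). Write \(V=\vol(Y)\). I assume \(V>0\); if \(V=0\) the process is a.s.\ empty and both sides vanish.

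The plan is to condition on \(N\). Given \(N=n\), the points \(Y_1,\dots,Y_n\) are i.i.d. The process is almost surely simple, so unordered pairs of distinct points of \(\calY\) correspond exactly to index pairs \(i<j\). Since each pair \((Y_i,Y_j)\) with \(i<j\) has the product law \(V^{-2}\,\dd y\,\dd z\), we get
\[
 \bbE\Bigl[\sum_{\{y,z\}\subset\calY}f(y,z)\Bigm| N=n\Bigr]
 =\binom n2\frac1{V^2}\int_Y\int_Y f(y,z)\,\dd y\,\dd z .
\]
Symmetry of \(f\) is what makes the sum over unordered pairs well defined. Because \(f\ge0\), Tonelli justifies every interchange, including the case where the integral is infinite.

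Next, average over \(N\). The second factorial moment of a Poisson variable is \(\bbE[N(N-1)]=(\lambda V)^2\), so \(\bbE\binom N2=(\lambda V)^2/2\). Multiplying by the conditional expectation gives
\[
 \frac{\lambda^2V^2}{2}\cdot\frac{1}{V^2}\int_Y\int_Y f
 =\frac{\lambda^2}{2}\int_Y\int_Y f(y,z)\,\dd y\,\dd z,
\]
which is the claimed identity.

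The only step that needs care is that this sampling description really produces a Poisson process of intensity \(\lambda\), as the paper asserts. This is the standard mixed-binomial characterization: for disjoint sets \(A_1,\dots,A_k\), one computes the joint generating function and checks independence of the counts together with the Poisson marginals. If I wanted to avoid relying on it, I would prove the statement for indicator functions of products \(f=\one_{A\times B}+\one_{B\times A}\) directly from the Poisson counts, and then extend to all nonnegative measurable symmetric \(f\) by a monotone class argument. I expect no real obstacle here.
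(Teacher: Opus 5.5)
Your proof is correct, but it takes a different route from the paper. The paper gives no argument of its own: it quotes the two-point case of the multivariate Mecke formula \cite[Theorem~4.4]{LastPenrose}, which holds for Poisson processes with any $\sigma$-finite intensity measure. You instead use $\vol(Y)<\infty$, so that the mixed-binomial representation recorded just before the lemma is available. The identity then reduces to $\bbE\binom N2=(\lambda\vol(Y))^2/2$ together with Tonelli. This is elementary and self-contained, and it covers every application in the paper, since the processes there always live on finite-volume regions. What the citation buys is generality the paper never needs: infinite intensity measures, $k$-point versions, and functionals of the whole configuration.

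One bookkeeping remark: to pass from your constructed process to an arbitrary Poisson process $\calY$, you need two facts. The first is that the construction is Poisson, which you flagged. The second is that the law of a Poisson process is determined by its intensity, because the definition pins down the joint law of the counts on disjoint sets. Both are standard.

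Your fallback avoids that issue entirely, since it uses only the defining properties of $\calY$. It runs most cleanly with ordered pairs. Consider the measure $C\mapsto\bbE\#\{(y,z)\in\calY^2: y\ne z,\ (y,z)\in C\}$ on $Y\times Y$. It is finite, and it equals $\lambda^2\vol(A)\vol(B)$ on rectangles $A\times B$, because $\bbE[N_{\calY}(A)N_{\calY}(B)-N_{\calY}(A\cap B)]=\lambda^2\vol(A)\vol(B)$. By the $\pi$--$\lambda$ theorem it therefore equals $\lambda^2\vol\otimes\vol$, and the statement for unordered pairs and symmetric $f$ follows by halving. Working directly with symmetrized rectangles $\one_{A\times B}+\one_{B\times A}$ is less convenient, since these are not closed under intersection.
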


\subsection{Graphs, good covers, and Gabber's estimate}

We first record an elementary observation for the rank argument.

\begin{lemma}\label{lem:graph-surjection}
Let \(\widehat{\mathcal G}\) be a connected graph.  Suppose that \(\Gamma\)
acts freely on its geometric realization and that the quotient graph
\(\widehat{\mathcal G}/\Gamma\) is finite.  Then
\[
 d(\Gamma)\le |E(\widehat{\mathcal G}/\Gamma)|.
\]
\end{lemma}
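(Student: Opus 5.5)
The plan is to pass to the quotient graph and use a spanning tree, which is the standard covering-space argument. Write $\mathcal G=\widehat{\mathcal G}/\Gamma$ for the quotient graph. It is finite by hypothesis, and it is connected because it is the image of the connected graph $\widehat{\mathcal G}$. Since $\Gamma$ acts freely on the geometric realization $|\widehat{\mathcal G}|$, no element fixes a vertex or inverts an edge. The action is also properly discontinuous: it is free and simplicial, and since $\mathcal G$ is finite each orbit is discrete. Hence $|\widehat{\mathcal G}|\to|\mathcal G|$ is a regular covering map with deck group $\Gamma$. In particular, we get a surjection $\pi_1(|\mathcal G|)\twoheadrightarrow\Gamma$, with kernel $\pi_1(|\widehat{\mathcal G}|)$. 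This needs only that $|\widehat{\mathcal G}|$ is connected, not simply connected.

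Next I use that $\pi_1$ of a finite connected graph is free. Choose a spanning tree $T\subset\mathcal G$. Collapsing $T$ is a homotopy equivalence onto a wedge of circles, one circle for each edge of $\mathcal G$ not in $T$. So
\[
 \pi_1(|\mathcal G|)\cong F_{|E(\mathcal G)|-|V(\mathcal G)|+1},
\]
which has rank at most $|E(\mathcal G)|$, since $\mathcal G$ is nonempty. A surjective image of a group generated by $k$ elements is generated by $k$ elements, so $d(\Gamma)\le |E(\mathcal G)|-|V(\mathcal G)|+1\le|E(\mathcal G)|$.

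For a more hands-on argument, one can lift $T$ to a tree $\widehat T\subset\widehat{\mathcal G}$ mapping isomorphically onto $T$; this is possible because $T$ is simply connected. For each edge $e\notin T$, its lift starting in $\widehat T$ ends at $\gamma_e\widehat T$ for a unique $\gamma_e\in\Gamma$. Connectivity of $\widehat{\mathcal G}=\bigcup_\gamma\gamma\widehat T\cup\{\text{lifted edges}\}$ then shows the $\gamma_e$ generate $\Gamma$.

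The only mildly delicate step is checking that the quotient map is a genuine covering. If edge inversions were possible, one would subdivide edges, but freeness on the realization excludes them, since an inversion fixes the midpoint of the edge. Beyond that the argument is routine, and I expect no real obstacle.
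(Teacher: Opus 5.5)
Your proposal is correct and is essentially the paper's argument: the quotient map is a regular covering with deck group $\Gamma$, so $\pi_1$ of the finite connected quotient graph, which is free of rank $|E|-|V|+1\le|E|$, surjects onto $\Gamma$. One small point: the covering property comes from freeness of an action by graph automorphisms, since open balls of radius $1/2$ about vertices and open edges have pairwise disjoint translates; it does not come from finiteness of the quotient, which is needed only for the rank count.
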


\begin{proof}
The quotient map is a regular graph covering with deck group \(\Gamma\), so
\(\pi_1(\widehat{\mathcal G}/\Gamma)\) surjects onto \(\Gamma\).  If \(Q\) is
a finite connected graph, then
\(\rank\pi_1(Q)=|E(Q)|-|V(Q)|+1\le |E(Q)|\), which proves the claim.
\end{proof}

Let \(\calU\) be a cover of a space \(M\).  Its \emph{nerve}
\(N(\calU)\) is the simplicial complex with one vertex for each
\(U\in\calU\), and with a simplex \(\{U_0,\ldots,U_k\}\) whenever
\(U_0\cap\cdots\cap U_k\ne\varnothing\).  A \emph{good cover} is an open
cover for which every nonempty finite intersection is contractible.  The nerve
lemma states that the nerve of a good cover is homotopy equivalent to the
covered space \cite[Cor.~4G.3]{Hatcher}.

We shall also use Gabber's lemma in the following form; see
\cite[Lemma~2.2]{EmeryTorsionHomology}.  If \(K\) is a finite connected
\(2\)-dimensional CW-complex with \(n_1\) unoriented \(1\)-cells, and every
cellular boundary vector of a \(2\)-cell has Euclidean norm at most \(L\), then
\[
 \log |H_1(K;\mathbb Z)_{\tors}|\le n_1\log L.
\]

\begin{lemma}\label{lem:topology-from-nerve}
Let \(M\) be a connected manifold and let \(\calU\) be a finite good cover of
\(M\).  If the nerve \(N(\calU)\) has \(E\) edges, then
\[
 d(\pi_1(M))\le E,
 \qquad
 \log |H_1(M;\mathbb Z)_{\tors}|\le E\log \sqrt{3}.
\]
\end{lemma}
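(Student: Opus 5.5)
By the nerve lemma \cite[Cor.~4G.3]{Hatcher}, $M$ is homotopy equivalent to $N(\calU)$. Since $M$ is connected, $N(\calU)$ is a finite connected simplicial complex, and $\pi_1(M)\cong\pi_1(N(\calU))$ and $H_1(M;\mathbb Z)\cong H_1(N(\calU);\mathbb Z)$. Both $\pi_1$ and $H_1$ depend only on the $2$-skeleton. So we may replace $N(\calU)$ by its $2$-skeleton $K$, which is a finite connected $2$-dimensional simplicial (hence CW) complex with exactly $E$ one-cells.

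For the generator bound, choose a spanning tree $T$ of the $1$-skeleton of $K$. Then $\pi_1(K)$ is a quotient of $\pi_1(K^{(1)})$, which is free of rank $E-|V|+1\le E$, with one generator for each edge outside $T$. Hence $d(\pi_1(M))\le E-|V|+1\le E$. This is the same count as in \cref{lem:graph-surjection}.

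For the torsion bound, apply Gabber's lemma as quoted above to $K$. Each $2$-cell is a $2$-simplex $\{U_0,U_1,U_2\}$. Its cellular boundary is $[U_1U_2]-[U_0U_2]+[U_0U_1]$, a vector with three entries $\pm1$ in the basis of (oriented) $1$-cells. Its Euclidean norm is therefore exactly $\sqrt3$. Gabber's estimate then gives $\log|H_1(K;\mathbb Z)_{\tors}|\le E\log\sqrt3$, and this transfers to $M$ via the homotopy equivalence.

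There is no real obstacle here. The only points to check are that the nerve lemma applies, which holds because $\calU$ is a finite good open cover of a paracompact space, and that truncating to the $2$-skeleton does not change $\pi_1$ or $H_1$.
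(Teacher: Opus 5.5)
Your proof is correct and follows the paper's argument essentially verbatim. You use the nerve lemma, the surjection from $\pi_1$ of the one-skeleton with the rank count $E-|V|+1\le E$, and Gabber's estimate on the $2$-skeleton, where each $2$-simplex has boundary vector of norm exactly $\sqrt3$; your remarks on paracompactness and on passing to the $2$-skeleton are the routine checks the paper leaves implicit.
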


\begin{proof}
By the nerve lemma, \(N(\calU)\) is homotopy equivalent to \(M\).  Since the
nerve is connected, its one-skeleton surjects on fundamental groups, and the
first assertion follows from \cref{lem:graph-surjection}, or directly from the
rank formula for a finite connected graph.

First homology is computed by the two-skeleton.  Every two-simplex has cellular
boundary vector of Euclidean norm \(\sqrt3\).  Gabber's lemma therefore gives
\[
 \log |H_1(M;\mathbb Z)_{\tors}|
 \le E\log\sqrt3 
\]
\end{proof}

\section{The thick case}\label{sec:thick-case}

In this section, \(M=\Gamma\backslash X\) is compact and
\(R=\Inj(M)\).  We prove \cref{thm:main} by constructing a random good cover
whose nerve is sparse on average.

\subsection{The unit-scale scaffold}

Assume \(\Inj(M)>1\), and choose a maximal \(1\)-separated set
\(\calX=\{x_i:i\in I\}\subset M\).  Thus distinct points of \(\calX\) are at
distance at least one, and maximality implies that the open balls
\(B_M(x_i,1)\) cover \(M\).  The balls \(B_M(x_i,1/2)\) are pairwise
disjoint.  We call \(\calX\) the \emph{scaffold}, and its points scaffold
centres.

\begin{lemma}\label{lem:scaffold-counts}
For every fixed \(D<\infty\), there is a constant \(C_X(D)<\infty\), depending
only on \(X\) and \(D\), with the following property.  If
\(\Inj(M)>D+1\), then
\[
 |I|\le C_X(D)\vol(M),
\]
and every scaffold centre has at most \(C_X(D)\) other scaffold centres within
distance \(D\).
\end{lemma}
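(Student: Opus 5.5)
This is a standard packing argument that compares the disjoint balls of radius \(1/2\) around scaffold centres with balls in \(X\). The key input is that, once \(\Inj(M)>D+1\), every ball \(B_M(x,s)\) with \(s\le D+1\) is embedded, so \(\vol B_M(x,s)=v(s)\). Both conclusions then follow by counting disjoint balls inside a region of controlled volume.

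For the global count, note that the balls \(B_M(x_i,1/2)\) are pairwise disjoint: distinct centres are at distance at least \(1\). Each of them is embedded, since \(1/2<\Inj(M)\). Hence
\[
 |I|\,v(1/2)=\sum_{i\in I}\vol B_M(x_i,1/2)\le\vol(M),
\]
so \(|I|\le v(1/2)^{-1}\vol(M)\). Here \(v(1/2)>0\) depends only on \(X\).

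For the local count, fix a centre \(x_i\) and let \(J\) be the set of other centres within distance \(D\) of \(x_i\). For each \(j\in J\), the ball \(B_M(x_j,1/2)\) lies inside \(B_M(x_i,D+1/2)\), and these balls are pairwise disjoint. Since \(D+1/2<\Inj(M)\), the big ball is embedded, and
\[
 |J|\,v(1/2)\le\vol B_M(x_i,D+1/2)=v(D+1/2).
\]
So \(|J|\le v(D+1/2)/v(1/2)\). Taking \(C_X(D)\) to be the maximum of \(v(1/2)^{-1}\) and \(v(D+1/2)/v(1/2)\) proves the lemma.

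The only delicate point is the embeddedness step. One must check that the hypothesis \(\Inj(M)>D+1\), with a margin of \(1/2\) to spare, makes \(B_M(x_i,D+1/2)\) isometric to a ball in \(X\). In fact, for the upper bound the general inequality \(\vol B_Q(x,s)\le v(s)\) suffices for the big ball; embeddedness is genuinely needed only for the small balls, to get the lower bound \(\vol B_M(x_j,1/2)=v(1/2)\).
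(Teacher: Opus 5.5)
Your proof is correct and is essentially the paper's argument: the disjoint embedded radius-\(1/2\) balls give \(|I|\,v(1/2)\le\vol(M)\), and packing the neighbours' \(1/2\)-balls into \(B_M(x_i,D+1/2)\) gives the local bound \(v(D+1/2)/v(1/2)\). Your remark that embeddedness is needed only for the small balls, since \(\vol B_M(x,s)\le v(s)\) always holds, is also correct.
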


\begin{proof}
The balls \(B_M(x_i,1/2)\) are pairwise disjoint and embedded, so
\(|I|v(1/2)\le\vol(M)\).  Fix \(x_i\).  If \(d_M(x_i,x_j)<D\), then
\(B_M(x_j,1/2)\subset B_M(x_i,D+1/2)\).  The latter ball is embedded, and the
smaller balls are pairwise disjoint, so the number of such \(x_j\) is at most
\(v(D+1/2)/v(1/2)\).
\end{proof}

\subsection{The sparse good cover}

Take \(t\ge T_X\), where \(T_X\) is large enough for
\cref{lem:log-radii} with \(B_0=4\), and put
\[
 a(t)=\rho(t)+2,
 \qquad
 S(t)=2a(t)=2\rho(t)+4.
\]
Assume throughout this subsection that \(S(t)<R\).  Let \(\calY_t\) be a
Poisson point process on \(M\) of intensity \(\lambda_t=v(t)^{-1}\).  A
scaffold centre \(x_i\) is called \emph{missed} if
\(\calY_t\cap B_M(x_i,\rho(t))=\varnothing\), and \emph{covered} otherwise.
Since \(B_M(x_i,\rho(t))\) is embedded, \cref{lem:log-radii} gives
\begin{equation}\label{eq:missed-prob-thick}
 \bbP(x_i\text{ is missed})
 =\exp\left(-\frac{v(\rho(t))}{v(t)}\right)
 \le t^{-Q_0}.
\end{equation}

Define the random cover
\[
 \calU_t=
 \{B_M(y,a(t)):y\in\calY_t\}
 \cup
 \{B_M(x_i,2):x_i\text{ is missed}\}.
\]

\begin{proposition}\label{prop:good-cover}
For almost every realization of the Poisson process, \(\calU_t\) is a finite
good cover of \(M\).
\end{proposition}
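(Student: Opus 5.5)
We need three things: finiteness, that \(\calU_t\) covers \(M\), and that every nonempty finite intersection of members of \(\calU_t\) is contractible.

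\emph{Finiteness.} Since \(M\) is compact it has finite volume. Hence almost surely \(\calY_t\) is a finite set, because \(N_{\calY_t}(M)\) is Poisson with finite mean \(\lambda_t\vol(M)\). The scaffold \(\calX\) is finite by \cref{lem:scaffold-counts}; any \(D\) with \(D+1<R\) will do. So \(\calU_t\) is a finite family of open balls.

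\emph{Covering.} Let \(p\in M\). By maximality of the scaffold there is \(x_i\) with \(d_M(p,x_i)<1\). If \(x_i\) is missed, then \(p\in B_M(x_i,2)\in\calU_t\). If \(x_i\) is covered, there is \(y\in\calY_t\) with \(d_M(x_i,y)<\rho(t)\). Then
\[
d_M(p,y)<\rho(t)+1<a(t),
\]
so \(p\in B_M(y,a(t))\). This step is purely deterministic.

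\emph{Contractibility of intersections.} This uses the lifting observation \eqref{eq:lifting}. Take a nonempty intersection \(B_M(c_0,r_0)\cap\cdots\cap B_M(c_k,r_k)\) of members of \(\calU_t\). Each radius \(r_j\) is either \(a(t)\) or \(2\), so \(r_j\le a(t)\) because \(a(t)\ge 2\). Every ball is embedded, since \(r_j\le a(t)<S(t)<R=\Inj(M)\). Moreover, for every \(j\),
\[
r_0+r_j\le 2a(t)=S(t)<R\le \inj_{c_0}(M).
\]
So \eqref{eq:lifting} holds with any choice of base ball \(c_0\). Consequently the intersection is isometric to an intersection of compatibly lifted balls in \(X\). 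That is a nonempty intersection of convex sets in a Hadamard manifold, hence convex and therefore contractible. Openness is clear, so \(\calU_t\) is a good cover.

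\emph{Main obstacle.} The step needing care is the lifting claim itself: that the lifts are compatible and the intersection downstairs is isometric to the intersection upstairs. This requires the injectivity radius at \(c_0\) to exceed \(r_0+r_j\), which is exactly why the hypothesis \(S(t)=2a(t)<R\) was imposed. I would spell out the argument as follows.
\begin{enumerate}
\item Fix a lift \(\tilde c_0\). Every point of the intersection lies within \(r_0\) of \(c_0\), so it has a unique lift in \(B_X(\tilde c_0,r_0)\).
\item Each \(c_j\) is within \(r_0+r_j<\inj_{c_0}(M)\) of \(c_0\), so it has a unique lift \(\tilde c_j\) within that distance of \(\tilde c_0\).
\item The covering projection is injective on \(B_X(\tilde c_0,r_0+\max_j r_j)\), which contains all the lifted balls. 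Hence the projection restricts to a bijection from \(\bigcap_j B_X(\tilde c_j,r_j)\) onto the intersection in \(M\).
\end{enumerate}
Since this holds for every realization of \(\calY_t\), the "almost every realization" qualifier is needed only for finiteness.
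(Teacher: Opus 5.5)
Your proof is correct and follows the paper's argument: finiteness from finite volume, covering through the scaffold using $\rho(t)+1<a(t)$, and contractibility through the lifting observation with $r_0+r_j\le 2a(t)=S(t)<R\le\inj_{c_0}(M)$. One small correction to your step 3: the lifted balls $B_X(\tilde c_j,r_j)$ need not lie in $B_X(\tilde c_0,r_0+\max_j r_j)$, since their points can be at distance nearly $r_0+2r_j$ from $\tilde c_0$. What you actually need is the following: the intersection of the lifted balls lies in $B_X(\tilde c_0,r_0)$, where the projection is injective, and surjectivity onto the intersection in $M$ follows from the uniqueness of the nearby lifts established in your steps 1 and 2.
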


\begin{proof}
The cover is finite almost surely because \(M\) has finite volume.  Let
\(p\in M\), and choose \(x_i\in\calX\) with \(d_M(p,x_i)<1\).  If \(x_i\) is
missed, then \(p\in B_M(x_i,2)\).  If \(x_i\) is covered, choose
\(y\in\calY_t\cap B_M(x_i,\rho(t))\).  Then
\(d_M(p,y)<1+\rho(t)<a(t)\), so \(p\in B_M(y,a(t))\).

Every ball in \(\calU_t\) has radius at most \(a(t)<R\), and is therefore
embedded.  If a finite family of these balls has nonempty intersection, choose
one of them as the first ball.  The sum of its radius and the radius of any
other ball is at most \(S(t)<R\), which is no greater than the injectivity radius at the centre of the reference ball.  The lifting
observation from the preliminaries (\cref{eq:lifting}) identifies the intersection with a convex
intersection of balls in \(X\).  Hence every nonempty finite intersection is
contractible.
\end{proof}

Let \(N_t=N(\calU_t)\), and let \(E_t\) be the number of edges of \(N_t\).
The next estimate is the quantitative heart of the proof.

\begin{proposition}\label{prop:edge-estimate-thick}
There are constants \(C_X,T_X<\infty\), depending only on \(X\), such that,
for every \(t\ge T_X\) with \(S(t)<\Inj(M)\),
\[
 \bbE E_t\le C_X\vol(M)t^{-m/2}(\log t)^2.
\]
\end{proposition}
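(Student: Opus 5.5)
We split the edges of $N_t$ into three types according to the kinds of sets they join: two Poisson balls, a Poisson ball and a missed-scaffold ball, or two missed-scaffold balls. We bound the expectation of each type separately, using the Mecke formula for Poisson pairs and \eqref{eq:missed-prob-thick} for scaffold terms. Throughout, all balls of radius at most $S(t)<R$ are embedded, so volumes of balls in $M$ of radius at most $S(t)$ equal $v(\cdot)$.

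\emph{Poisson--Poisson edges.} Two balls $B_M(y,a(t))$ and $B_M(z,a(t))$ meet only if $d_M(y,z)<2a(t)=2\rho(t)+4$. By \cref{lem:mecke} with $f(y,z)=\one[d_M(y,z)<2\rho(t)+4]$, the expected number of such pairs is
\[
\frac{\lambda_t^2}{2}\int_M \vol B_M(y,2\rho(t)+4)\,\dd y\le \frac{\vol(M)\,v(2\rho(t)+4)}{2v(t)^2}.
\]
By \eqref{eq:prox-volume} with $B=4$, this is at most $C_X\vol(M)t^{-m/2}(\log t)^2$. This is the dominant term, and it gives the claimed rate.

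\emph{Mixed edges.} For each scaffold centre $x_i$, count the Poisson points $y$ with $d_M(y,x_i)<a(t)+2=\rho(t)+4$, weighted by $\one[x_i\text{ missed}]$. Missedness means there are no Poisson points in $B_M(x_i,\rho(t))$. This event is independent of the points in the annulus $B_M(x_i,\rho(t)+4)\setminus B_M(x_i,\rho(t))$, and on this event there are no contributing points inside $B_M(x_i,\rho(t))$. The expectation is therefore at most
\[
\bbP(x_i\text{ missed})\cdot \lambda_t v(\rho(t)+4)\le t^{-Q_0}\,C_X\log t,
\]
using \eqref{eq:missed-prob-thick} and \eqref{eq:rho-volume}. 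Summing over $|I|\le C_X\vol(M)$ centres (\cref{lem:scaffold-counts} with $D=4$, valid since $R>S(t)>5$) gives $C_X\vol(M)t^{-Q_0}\log t$.

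\emph{Scaffold--scaffold edges.} Two balls $B_M(x_i,2)$ and $B_M(x_j,2)$ meet only if $d(x_i,x_j)<4$. Each centre has at most $C_X(4)$ such neighbours, so this count is at most $C_X\sum_i\one[x_i\text{ missed}]$, whose expectation is at most $C_X\vol(M)t^{-Q_0}$.

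Finally, $Q_0=m/2+4>m/2$, so the last two terms are $O(\vol(M)t^{-m/2})$ and are absorbed into the first. Enlarging $T_X$ so that all the cited lemmas apply finishes the argument. The only delicate point is the independence used in the mixed term. If one prefers to avoid it, an alternative is to use the conditional form of Mecke/Slivnyak: a Poisson point at $y$ with $d(y,x_i)<\rho(t)+4$, together with the rest of the process, still leaves $x_i$ missed with probability at most $t^{-Q_0}$, since adding a point only makes missing less likely. Integrating over $y$ gives the same bound.
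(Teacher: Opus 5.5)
Your proof is correct and follows the paper's argument essentially line by line. It uses the same split into Poisson--Poisson, mixed, and scaffold--scaffold edges, the Mecke formula with \eqref{eq:prox-volume} for the dominant term, independence of the inner ball and the surrounding annulus together with \eqref{eq:rho-volume} for mixed edges, the local degree bound of \cref{lem:scaffold-counts} with $D=4$ for scaffold edges, and absorption of the $t^{-Q_0}$ terms using $Q_0>m/2$. Your alternative Slivnyak--Mecke treatment of the mixed term is also valid, though not needed.
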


\begin{proof}
Write \(E_t^{\mathrm{PP}}\), \(E_t^{\mathrm{SS}}\), and
\(E_t^{\mathrm{PS}}\) for the numbers of Poisson--Poisson,
scaffold--scaffold, and mixed edges, respectively.  Thus
\(E_t=E_t^{\mathrm{PP}}+E_t^{\mathrm{SS}}+E_t^{\mathrm{PS}}\).

A Poisson--Poisson edge can occur only when the two Poisson points are at
distance less than \(S(t)\).  By \cref{lem:mecke} and the embeddedness of
radius-\(S(t)\) balls,
\[
 \bbE E_t^{\mathrm{PP}}
 \le \frac{\lambda_t^2}{2}\int_M\vol B_M(y,S(t))\,\dd y
 =\frac12\vol(M)\frac{v(2\rho(t)+4)}{v(t)^2}.
\]
By \eqref{eq:prox-volume}, this is at most
\(C_X\vol(M)t^{-m/2}(\log t)^2\).

A scaffold--scaffold edge can occur only between missed centres at distance
less than four.  Let \(B_t\) be the number of missed scaffold centres.  After
increasing \(T_X\), we may assume \(S(t)>5\).  Then
\cref{lem:scaffold-counts}, applied with \(D=4\), gives
\(E_t^{\mathrm{SS}}\le C_X B_t\).  By
\eqref{eq:missed-prob-thick},
\[
 \bbE B_t\le |I|t^{-Q_0}\le C_X\vol(M)t^{-Q_0},
\]
and hence \(\bbE E_t^{\mathrm{SS}}\le C_X\vol(M)t^{-Q_0}\).

For mixed edges, fix a scaffold centre \(x_i\).  Its radius-two ball can meet a
Poisson ball only if the Poisson centre lies in
\(B_M(x_i,\rho(t)+4)\).  On the event that \(x_i\) is missed, the inner ball
\(B_M(x_i,\rho(t))\) contains no Poisson point. Independence of the Poisson counts in the inner ball and the surrounding annulus gives
\[
 \bbE\left[
  \one_{\{x_i\text{ is missed}\}}
  N_{\calY_t}(B_M(x_i,\rho(t)+4))
 \right]
 \le t^{-Q_0}\frac{v(\rho(t)+4)}{v(t)}
 \le C_X t^{-Q_0}\log t.
\]
Summing over \(i\in I\) yields
\(\bbE E_t^{\mathrm{PS}}\le C_X\vol(M)t^{-Q_0}\log t\).

Since \(Q_0=m/2+4\), the last two contributions are bounded by the
Poisson--Poisson contribution for all sufficiently large \(t\).  Increasing
\(C_X\) and \(T_X\) proves the proposition.
\end{proof}

\begin{proof}[Proof of \cref{thm:main}]
Let \(R=\Inj(M)\) and take \(t=R/4\).  For all sufficiently large \(R\),
depending only on \(X\), one has \(t\ge T_X\) and
\[
 S(t)=2\rho(R/4)+4=R/2+O_X(\log\log R)<R.
\]
Thus \cref{prop:good-cover,prop:edge-estimate-thick} apply.  For every
realization of the Poisson process, \cref{lem:topology-from-nerve} gives
\[
 d(\Gamma)\le E_t,
 \qquad
 \log |H_1(M;\mathbb Z)_{\tors}|\le E_t\log3.
\]
The left-hand sides are deterministic.  Taking expectations and applying
\cref{prop:edge-estimate-thick} gives
\[
 \max\bigl\{d(\Gamma),\log |H_1(M;\mathbb Z)_{\tors}|\bigr\}
 \le C_X\vol(M)t^{-m/2}(\log t)^2.
\]
Since \(t=R/4\) and \(m=r-1\), this is the required estimate after changing
\(C_X\) and increasing \(R_X\).
\end{proof}

\section{First-homology torsion along BS-convergent manifolds}
\label{sec:bs-torsion-manifolds}

In this section, all locally symmetric spaces are manifolds, and all lattices
are torsion-free and irreducible.  Put \(N=\dim X\).  We sample the Poisson
process only in a thick region.  A
scaffold centre is treated separately if it lies too close to the thin part or
if its radius-\(\rho(t)\) ball is missed by the Poisson process.

\subsection{Good covers of homology cores}

\begin{lemma}\label{lem:torsion-from-core-cover}
Let \(M\) and \(C\subset M\) be connected, and let \(C\subset U\subset M\).
Assume that the inclusion \(C\hookrightarrow M\) induces an isomorphism on
\(H_1(-;\mathbb Z)\).  If \(U\) admits a finite good cover \(\calU\), and
\(E(\calU)\) is the number of edges of its nerve, then
\[
        \log |H_1(M;\mathbb Z)_{\tors}|\le E(\calU)\log3.
\]
\end{lemma}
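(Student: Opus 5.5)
The plan is to reduce to \cref{lem:topology-from-nerve}, or more precisely to its Gabber-type half, by showing that \(H_1(M;\mathbb Z)_{\tors}\) embeds in \(H_1(U;\mathbb Z)_{\tors}\). Write \(i\colon C\hookrightarrow U\) and \(j\colon U\hookrightarrow M\) for the inclusions. Since \(j\circ i\) is the inclusion \(C\hookrightarrow M\), functoriality gives \(j_*\circ i_*=(j\circ i)_*\), which is an isomorphism on \(H_1(-;\mathbb Z)\) by hypothesis. Put \(s=i_*\circ(j\circ i)_*^{-1}\colon H_1(M;\mathbb Z)\to H_1(U;\mathbb Z)\). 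Then \(j_*\circ s=\mathrm{id}\), so \(s\) is injective and \(H_1(M;\mathbb Z)\) is a direct summand of \(H_1(U;\mathbb Z)\). Any injective homomorphism carries torsion into torsion, so \(s\) restricts to an injection \(H_1(M;\mathbb Z)_{\tors}\hookrightarrow H_1(U;\mathbb Z)_{\tors}\). Hence \(|H_1(M;\mathbb Z)_{\tors}|\le|H_1(U;\mathbb Z)_{\tors}|\).

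Next I bound \(|H_1(U;\mathbb Z)_{\tors}|\) using the good cover \(\calU\). The open set \(U\) is a subspace of a manifold, hence paracompact, so the nerve lemma applies and gives \(U\simeq N(\calU)\). One small issue is that \(U\) need not be connected, whereas Gabber's estimate was stated for connected complexes. I will handle this component by component. Each component \(U_\alpha\) of \(U\) is covered by those members of \(\calU\) contained in it, since each member is contractible and hence connected. These members form a finite good cover of \(U_\alpha\). The nerve \(N(\calU)\) is then the disjoint union of the nerves \(N_\alpha\) of these subcovers, so \(E(\calU)=\sum_\alpha E(N_\alpha)\).

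On each connected \(N_\alpha\), first homology is computed by the two-skeleton. Every \(2\)-simplex has boundary vector of Euclidean norm \(\sqrt3\), exactly as in the proof of \cref{lem:topology-from-nerve}, so Gabber's lemma gives \(\log|H_1(N_\alpha;\mathbb Z)_{\tors}|\le E(N_\alpha)\log\sqrt3\). Torsion of a direct sum is the direct sum of the torsions, so summing over \(\alpha\) gives \(\log|H_1(U;\mathbb Z)_{\tors}|\le E(\calU)\log\sqrt3\le E(\calU)\log 3\). Combining this with the injection from the first step yields the claim. Alternatively, since \(C\) is connected, it lies in a single component \(U_0\), and one can run the whole argument with \(U_0\) in place of \(U\), using only the subcover of \(\calU\) that covers \(U_0\).

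I do not expect a serious obstacle. The only points needing care are the disconnectedness of \(U\) just discussed and the observation that a split injection (rather than merely a surjection) is what transfers torsion bounds; the composite \(C\to U\to M\) being an isomorphism provides exactly this.
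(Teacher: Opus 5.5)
Your proposal is correct and follows essentially the paper's route: the section \(i_*\circ((j\circ i)_*)^{-1}\) of \(j_*\) makes \(H_1(M;\mathbb Z)\) a direct summand of \(H_1(U;\mathbb Z)\), and the nerve lemma together with Gabber's lemma bounds the torsion of the latter. The only difference is in handling disconnectedness: the paper passes to the component \(U_0\) containing \(C\), which is your stated alternative, whereas your main route sums over all components, and that works equally well.
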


\begin{proof}
Let \(U_0\) be the connected component of \(U\) containing \(C\).  Since every
member of a good cover is connected, the members of \(\calU\) contained in
\(U_0\) form a good cover of \(U_0\), whose nerve has at most
\(E(\calU)\) edges.  We may therefore replace \(U\) by \(U_0\).

Let \(i:C\hookrightarrow U\) and \(j:U\hookrightarrow M\) be the inclusions.
Since \((j\circ i)_*\) is an isomorphism on first homology, the map
\(i_*\circ((j\circ i)_*)^{-1}\) is a section of \(j_*\).  Thus
\(H_1(M;\mathbb Z)\) is a direct summand of \(H_1(U;\mathbb Z)\), and the order
of its torsion subgroup is no larger.  The nerve lemma identifies the latter
group with the first homology of the nerve of \(\calU\).  Applying Gabber's
lemma to its \(2\)-skeleton proves the claim.
\end{proof}

\subsection{The noncompact case}

We use the following form of Gelander's construction.

\begin{lemma}[Gelander's homotopy core]
\label{lem:gelander-noncompact-thick-core}
There is a constant \(\beta_X>0\), depending only on \(X\), such that every
noncompact finite-volume torsion-free irreducible \(X\)-manifold \(M\) contains
a compact connected subset \(C_M\subset M_{\ge\beta_X}\) whose inclusion into
\(M\) is a homotopy equivalence.
\end{lemma}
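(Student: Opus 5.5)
This lemma is essentially a repackaging of Gelander's work on homotopy types of locally symmetric spaces \cite{GelanderHomotopy}, so the plan is to extract it from that construction rather than prove it from scratch.

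\textbf{Step 1 (thick part).} By the Margulis lemma, the \(\varepsilon\)-thin part \(M_{<\varepsilon}\) for \(\varepsilon\) below the Margulis constant \(\mu_X\) is a disjoint union of components. Each component has virtually nilpotent local fundamental group. In higher rank and for irreducible \(\Gamma\), the unbounded components are cusp neighbourhoods governed by reduction theory, namely Siegel sets and horospherical coordinates.

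\textbf{Step 2 (deformation retraction into the thick part).} Gelander constructs a deformation retraction of \(M\) onto a compact subset of \(M_{\ge\beta}\), for a constant \(\beta=\beta_X\) depending only on \(X\). For the cuspidal components he flows along directions in which the injectivity radius increases. These are, roughly, geodesic rays pointing back from the cusps, that is, gradients of suitable Busemann-type functions built from the unipotent radicals of the rational parabolics. The flow lines are pushed until they reach a region where \(\inj\ge\beta_X\). For the compact thin components, such as tubes around short closed geodesics (which also occur in the noncompact case), he uses the fact that a tube around a short geodesic retracts, rel its boundary, into the set where the injectivity radius is bounded below. Concretely, one replaces the thin component by a neighbourhood of its boundary in a slightly thicker level, using convexity of the displacement functions \(d(x,\gamma x)\) on \(X\).

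\textbf{Step 3 (compactness and connectedness).} The image of the retraction is a deformation retract of \(M\), so its inclusion is a homotopy equivalence and it is connected because \(M\) is. It is compact because Step 2 removes the entire unbounded cuspidal region. By Mumford-type compactness of thick parts of finite-volume manifolds, \(M_{\ge\beta}\) is compact.

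\textbf{Main obstacle.} The hardest point is the uniformity of \(\beta_X\). The retraction must land in \(M_{\ge\beta_X}\) with \(\beta_X\) independent of \(M\), even where several thin regions associated to different parabolics overlap. Here I would quote Gelander's theorem directly. It provides a homotopy equivalence of \(M\) with a subset of the \(\varepsilon\)-thick part for a uniform \(\varepsilon=\varepsilon_X\), with a smooth proper retraction. I would set \(\beta_X\) to be that \(\varepsilon\), so that the lemma follows as stated.
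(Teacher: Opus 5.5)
Your load-bearing step is exactly the paper's proof. You quote the noncompact case of Gelander's theorem, take \(\beta_X\) to be his uniform thickness constant, and get connectedness of the core from the homotopy equivalence with the connected manifold \(M\). The paper likewise cites Theorem~1.5(1) of \cite{GelanderHomotopy} and adds the same connectedness remark, so as a citation-based proof your proposal is fine.

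Your Step~2 explanation of why Gelander's theorem holds, however, contains a false claim and misidentifies the mechanism. A tube \(T\) (a disc bundle over a short closed geodesic) does not retract, rel \(\partial T\), into the region where the injectivity radius is bounded below. The end map would be a retraction of \(T\) onto a collar of \(\partial T\), which would force \(H_*(\partial T)\to H_*(T)\) to be injective. But the fibre sphere of \(\partial T\) is nonzero in homology and bounds a fibre disc in \(T\simeq S^1\). Compare hyperbolic Dehn filling: there the thick part of the filled manifold is the truncated cusped manifold, which has a different fundamental group.

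Your parenthetical that such tubes also occur in the noncompact case is false at a uniform scale in this setting. This absence of short geodesics is what makes a uniformly thick core possible. By Margulis arithmeticity, a nonuniform irreducible \(\Gamma\) is commensurable with \(\mathbf H(\mathbb Z)\) for a \(\mathbb Q\)-isotropic \(\mathbb Q\)-group \(\mathbf H\). Isotropy makes every archimedean factor noncompact, so \(\dim\mathbf H=\dim G\). Hence \(\operatorname{Ad}\gamma\) has a monic integral characteristic polynomial of degree \(\dim G\) for every \(\gamma\in\Gamma\). Kronecker's theorem, together with the finiteness of such polynomials of bounded Mahler measure, gives a uniform positive lower bound on the translation length of every \(\gamma\in\Gamma\) that is not virtually unipotent. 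So at scale \(\beta_X\) the thin part is governed only by virtually unipotent elements, i.e.\ it is purely cuspidal.

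This is also why the lemma is stated only for noncompact \(M\). For compact quotients the degree of the trace field is unbounded, so short geodesics cannot be excluded unconditionally. The paper therefore uses the Fr\k{a}czyk--Hurtado--Raimbault variable-radius cover (\cref{thm:fhr-variable-radius-cover}) there instead of a thick core.
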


\begin{proof}
This is the construction in \cite[Theorem~1.5(1)]{GelanderHomotopy}.  The core
may be taken connected because it is homotopy equivalent to the connected
manifold \(M\).
\end{proof}

Fix \(0<\delta<\min\{1,\beta_X/20\}\).  For such a manifold \(M\), choose a
maximal \(\delta\)-separated set
\(\calX=\{x_i:i\in I\}\subset C_M\).  Then the closed balls
\(\overline B_M(x_i,\delta)\) cover \(C_M\), the balls
\(B_M(x_i,\delta/2)\) are pairwise disjoint and embedded, and
\(|I|\le\vol(M)/v(\delta/2)\).

\begin{proposition}\label{prop:bs-torsion-noncompact-estimate}
There are constants \(C_X,b_X,T_X<\infty\), depending only on \(X\), such that
every noncompact finite-volume torsion-free irreducible \(X\)-manifold \(M\)
satisfies, for every \(t\ge T_X\),
\[
        \frac{\log |H_1(M;\mathbb Z)_{\tors}|}{\vol(M)}
        \le C_X\left(
        t^{-m/2}(\log t)^2
        +\log t\,
        \frac{\vol(M_{<3\rho(t)+b_X})}{\vol(M)}
        \right).
\]
\end{proposition}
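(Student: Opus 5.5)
We adapt the scaffolded Poisson cover of \cref{sec:thick-case} to the core \(C_M\) of \cref{lem:gelander-noncompact-thick-core}. We then apply \cref{lem:torsion-from-core-cover} with \(C=C_M\), which is allowed because the inclusion \(C_M\hookrightarrow M\) is a homotopy equivalence and hence an isomorphism on \(H_1\).

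\textbf{Construction.} Fix \(t\ge T_X\) and let \(b_X\) be a constant to be chosen, of size \(O(1)\) plus \(\beta_X\). Split the \(\delta\)-scaffold \(\calX\subset C_M\) into two kinds of centres:
\begin{itemize}[label={}]
\item \emph{thin} centres \(x_i\), with \(\inj_{x_i}(M)<3\rho(t)+b_X\);
\item \emph{thick} centres, the remaining ones.
\end{itemize}
Sample a Poisson process \(\calY_t\) of intensity \(v(t)^{-1}\) on the thick region \(M_{\ge 2\rho(t)+b'}\), for a suitable constant \(b'\). Call a thick centre \emph{missed} if \(B_M(x_i,\rho(t))\) contains no Poisson point. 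Take the cover
\[
\calU_t=\{B_M(y,a(t)) : y\in\calY_t\}\cup\{B_M(x_i,\delta'):x_i \text{ thin or missed}\},
\]
with a small radius \(\delta'\in(\delta,\beta_X/10)\). This covers \(C_M\): a point of \(C_M\) lies within \(\delta\) of some centre, and a covered thick centre lies within \(\rho(t)\) of a Poisson point. Let \(U\) be the union of the cover.

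\textbf{Goodness.} Poisson balls are embedded, and for a thick centre the ball \(B_M(x_i,\rho(t))\) lies in the sampling region once \(b_X\) is large. Small balls are embedded because \(C_M\subset M_{\ge\beta_X}\). For an intersecting family, take as reference ball a Poisson ball if the family contains one, and a small ball otherwise. Injectivity radius is \(1\)-Lipschitz, so every Poisson centre has injectivity radius at least \(2\rho(t)+b'>a(t)+a(t)\). If the family consists only of small balls, \(\inj\ge\beta_X>2\delta'\). The lifting observation \eqref{eq:lifting} then gives contractible intersections. This is where the constants \(b',b_X\) are fixed, and it is where the factor \(3\rho(t)\) in the statement comes from.

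\textbf{Counting edges.} We estimate \(\bbE E(\calU_t)\) term by term.
\begin{enumerate}[label=(\roman*)]
\item Poisson--Poisson edges: by \cref{lem:mecke} and \eqref{eq:prox-volume}, as in \cref{prop:edge-estimate-thick}, their expected number is \(\le C_X\vol(M)t^{-m/2}(\log t)^2\). The volume bound \(\vol B_M\le v\) suffices here.
\item Missed thick centres: each is missed with probability \(\le t^{-Q_0}\) by \eqref{eq:miss-prob}, since its ball is embedded and lies inside the sampling region. The \(\delta\)-separation bound \(|I|\le\vol(M)/v(\delta/2)\) then handles their edges to other small balls and, via independence as before, their mixed edges to Poisson balls with \(\bbE\le C t^{-Q_0}\log t\). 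The count of nearby centres within distance \(2\delta'\) is bounded using the embedded disjoint balls \(B(x_j,\delta/2)\) inside an embedded ball of radius \(<\beta_X\).
\item Thin centres: their balls \(B(x_i,\delta/2)\) are disjoint and lie in \(M_{<3\rho(t)+b_X+1}\). After absorbing the \(1\) into \(b_X\), there are at most \(C\vol(M_{<3\rho(t)+b_X})\) of them. Each meets \(O(1)\) small balls. The expected number of Poisson balls it meets is at most \(\lambda_t\,v(\rho(t)+4)\le C\log t\) by \eqref{eq:rho-volume}. This step needs no independence, only Campbell's formula.
\end{enumerate}

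Summing these bounds gives the claimed estimate in expectation. Choosing a realization with \(E\le\bbE E\) and applying \cref{lem:torsion-from-core-cover} finishes the proof. The main obstacle is the goodness and containment bookkeeping: the sampling region must be thick enough for Poisson balls to lift, while thick scaffold balls must lie inside it, with all thresholds expressible as \(3\rho(t)+b_X\).
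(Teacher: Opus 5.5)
Your proposal is correct and follows essentially the same route as the paper. The paper also samples Poisson points only in a thick region, retains small scaffold balls at thin or missed centres of a $\delta$-net of Gelander's core $C_M$, and splits the edges into Poisson--Poisson, scaffold--scaffold and mixed, using Mecke, the independence and annulus bound, and the disjoint $\delta/2$-balls. It then concludes via \cref{lem:torsion-from-core-cover}.

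The only differences are cosmetic parameter choices. You sample on $M_{\ge 2\rho(t)+b'}$ with small radius $\delta'$, where the paper uses $\omega=2\rho+4\delta+1$, small radius $2\delta$ and Poisson radius $\rho+2\delta$. Your ``absorbing the $1$ into $b_X$'' in (iii) should be read as taking the statement's constant to be your thin-threshold constant plus one.
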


\begin{proof}
Choose \(T_X\) so that \cref{lem:log-radii} applies with \(B_0=4\delta\).  For \(t\ge T_X\), abbreviate \(\rho=\rho(t)\), and put
\[
        a=\rho+2\delta,
        \qquad
        \omega=2a+1=2\rho+4\delta+1,
        \qquad
        A=\omega+\rho.
\]
Let \(W=M_{\ge\omega}\), and sample on \(W\) a Poisson point process
\(\calY_t\) of intensity \(v(t)^{-1}\).

A scaffold centre \(x_i\) is called \emph{thin} if
\(\inj_{x_i}(M)<A\).  If it is not thin, then
\(B_M(x_i,\rho)\) is embedded and contained in \(W\).  Such a centre is called
\emph{missed} if this ball contains no point of \(\calY_t\), and
\emph{covered} otherwise.  Thin and missed centres are called exceptional.
For every non-thin centre,
\begin{equation}\label{eq:noncompact-miss-prob}
        \bbP(x_i\text{ is missed})
        =\exp\left(-\frac{v(\rho)}{v(t)}\right)
        \le t^{-Q_0}.
\end{equation}

Set
\[
        \calU_t
        =\{B_M(y,a):y\in\calY_t\}
        \cup
        \{B_M(x_i,2\delta):x_i\text{ is exceptional}\},
        \qquad
        U_t=\bigcup_{U\in\calU_t}U.
\]
Then \(C_M\subset U_t\).  Indeed, every point of \(C_M\) is within \(\delta\) of
a scaffold centre.  It is covered by the retained scaffold ball if that centre
is exceptional, and otherwise by a Poisson ball whose centre lies within
\(\rho\) of the scaffold centre.

The family \(\calU_t\) is a finite good cover of \(U_t\).  To see this,
consider a nonempty finite intersection.  If it contains only retained
scaffold balls, choose one of their centres as a reference.  Every other centre
is within \(4\delta<\beta_X\) of it, so the balls lift uniquely to a common
collection of balls in \(X\).  If the intersection contains a Poisson ball
centred at \(y\in W\), use that ball as the reference.  The centre of every
other ball in the intersection is within \(2a<\omega\le
\inj_y(M)\) of \(y\), so the same unique-lifting argument
applies.  In either case, the intersection is isometric to a convex
intersection of balls in \(X\), and is therefore contractible.

Since \(C_M\hookrightarrow M\) is a homotopy equivalence,
\cref{lem:torsion-from-core-cover} gives
\[
        \log |H_1(M;\mathbb Z)_{\tors}|\le E_t\log3,
\]
where \(E_t\) is the number of edges of \(N(\calU_t)\).  Write
\(E_t^{\mathrm{PP}}\), \(E_t^{\mathrm{SS}}\), and
\(E_t^{\mathrm{PS}}\) for the Poisson--Poisson, scaffold--scaffold, and mixed
edge counts.

A Poisson--Poisson edge has endpoints at distance less than
\(2a=2\rho+4\delta\).  Since \(2a<\omega\), the relevant balls centred in \(W\)
are embedded.  By \cref{lem:mecke,eq:prox-volume},
\[
        \bbE E_t^{\mathrm{PP}}
        \le C_X\vol(M)t^{-m/2}(\log t)^2.
\]

Let \(T_t\) be the number of thin scaffold centres and \(B_t\) the number of
missed non-thin centres.  If \(x_i\) is thin, then
\(B_M(x_i,\delta/2)\subset M_{<A+\delta/2}\).  The balls on the left are pairwise
disjoint and embedded, so, after choosing
\(b_X\ge4\delta+1+\delta/2\),
\[
        T_t\le C_X\vol(M_{<3\rho+b_X}).
\]
Moreover, \cref{eq:noncompact-miss-prob} gives
\(\bbE B_t\le C_X\vol(M)t^{-Q_0}\).  The scaffold has uniformly bounded
degree at distance \(4\delta\), and hence
\[
        \bbE E_t^{\mathrm{SS}}
        \le C_X\bigl(\vol(M_{<3\rho+b_X})
        +\vol(M)t^{-Q_0}\bigr).
\]

Finally, a mixed edge incident to \(x_i\) requires a Poisson point in
\(B_M(x_i,\rho+4\delta)\).  For a thin centre, the expected number of such
points is at most
\(v(\rho+4\delta)/v(t)\le C_X\log t\); this upper bound remains valid even when
the quotient ball is not embedded, since its volume is at most the volume of
the corresponding ball in \(X\).  If \(x_i\) is non-thin and missed, the
radius-\(\rho\) ball is contained in \(W\) and contains no Poisson point.
Independence on the surrounding annulus, together with
\cref{eq:noncompact-miss-prob,eq:rho-volume}, gives an expected contribution
at most \(C_X t^{-Q_0}\log t\).  Therefore
\[
        \bbE E_t^{\mathrm{PS}}
        \le C_X\left(
        \log t\,\vol(M_{<3\rho+b_X})
        +\vol(M)t^{-Q_0}\log t
        \right).
\]
Combining the three estimates, using \(\log t\ge1\), and absorbing the terms
involving \(t^{-Q_0}\) into the Poisson--Poisson term proves the proposition.
\end{proof}

\subsection{The compact case}

For a compact manifold \(M\), write
\(\varepsilon_x=\min\{1,\inj_x(M)\}\).

\begin{theorem}[Fr\k{a}czyk--Hurtado--Raimbault]
\label{thm:fhr-variable-radius-cover}
There are constants \(C_0,\Delta,b_0<\infty\), depending only on \(X\), such
that every compact torsion-free irreducible \(X\)-manifold \(M\) admits a
finite set \(S\subset M\) with the following properties.  For \(s\in S\), put
\(r_s=\varepsilon_s/6\).  Then
\(\mathcal V=\{B_M(s,r_s):s\in S\}\) is a finite good cover of \(M\), the
nerve of \(\mathcal V\) has vertex degree at most \(\Delta\), and
\(|S|\le C_0\vol(M)\).  Moreover, for every \(A\ge1\),
\[
        \#\{s\in S:\inj_s(M)<A\}
        \le C_0\int_{M_{<A+b_0}}\varepsilon_x^{-N}\,\dd x.
\]
\end{theorem}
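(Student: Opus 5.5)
This theorem is quoted from Fr\k{a}czyk--Hurtado--Raimbault \cite{FraczykHurtadoRaimbaultTopComplexity}, and we only use it as stated. If I had to reprove it, I would use a variable-radius version of the scaffold of \cref{sec:thick-case}. The basic observation is that \(x\mapsto\inj_x(M)\) is \(1\)-Lipschitz on \(M\). Hence so are \(x\mapsto\varepsilon_x\) and \(x\mapsto r_x=\varepsilon_x/6\), with Lipschitz constant \(1/6\).

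I would take \(S\subset M\) to be a maximal subset with the following property: for distinct \(s,s'\in S\), \(d_M(s,s')\ge\tfrac12\max\{r_s,r_{s'}\}\). Such a set exists by Zorn's lemma, and it is finite by compactness together with \(\varepsilon\ge\min\{1,\Inj(M)\}>0\). Maximality gives, for every \(p\in M\), some \(s\) with \(d(p,s)<\tfrac12\max\{r_p,r_s\}\). The Lipschitz bound then gives \(r_p\le r_s+d(p,s)/6\), hence \(d(p,s)<r_s\). So \(\mathcal V\) covers \(M\).

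For the good-cover property I would argue as in \cref{prop:good-cover}. Take a nonempty intersection and use as reference the ball whose radius is largest. Any other ball in the intersection has centre within \(2r_{s_0}\) of \(s_0\), and \(r_{s_0}+r_s\le\varepsilon_{s_0}/3<\inj_{s_0}(M)\). Therefore \eqref{eq:lifting} applies, the balls lift to convex balls in \(X\), and the intersection is contractible.

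For the degree bound, suppose \(B(s,r_s)\cap B(s',r_{s'})\neq\varnothing\). Then \(d(s,s')<r_s+r_{s'}\), and the Lipschitz property makes \(r_s\) and \(r_{s'}\) comparable up to a universal factor. The balls \(B(s',r_{s'}/4)\) are pairwise disjoint by the separation condition. They are also embedded, and they all lie in the embedded ball \(B(s,Cr_s)\). Since \(X\) has bounded geometry, \(v(\epsilon)\asymp_X\epsilon^N\) for \(\epsilon\le1\). Comparing volumes then bounds the number of such \(s'\) by a constant \(\Delta\) depending only on \(X\). The same disjointness argument gives the thin-part count. Each \(s\) with \(\inj_s(M)<A\) contributes
\[
 1\le C\int_{B(s,r_s/4)}\varepsilon_x^{-N}\,\dd x ,
\]
because \(\varepsilon_x\asymp\varepsilon_s\) on that ball. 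The balls are disjoint and lie in \(M_{<A+1}\), so summing over such \(s\) yields the displayed estimate with \(b_0=1\).

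The main obstacle is the linear bound \(|S|\le C_0\vol(M)\). The argument above only gives \(|S|\le C\int_M\varepsilon_x^{-N}\,\dd x\). One therefore needs an estimate of the form \(\int_M\varepsilon_x^{-N}\,\dd x\le C_0\vol(M)\), or else a coarser choice of radii deep in the thin part. My first attempt would use the Margulis lemma to describe \(M_{<\epsilon}\) near short closed geodesics as tubes. Along each tube I would sum \(\varepsilon^{-N}\) over the layers \(\{2^{-k-1}\le\varepsilon<2^{-k}\}\) and compare with the volume of the thick shell surrounding the tube. In higher rank this is precisely the nontrivial input of \cite{FraczykHurtadoRaimbaultTopComplexity}: it needs their control of how thin regions sit inside thicker ones. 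If the integral bound fails, I would replace \(r_s\) inside a tube by a radius adapted to the tube's cross-section rather than to \(\inj_s\), at the price of a more delicate convexity check.
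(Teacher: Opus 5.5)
Everything you verify directly is correct. With the rule $d(s,s')\ge\tfrac12\max\{r_s,r_{s'}\}$ the balls cover $M$: in both cases of the maximum one gets $d(p,s)<r_s$. The largest-radius reference ball satisfies \eqref{eq:lifting}, so the cover is good. The disjoint embedded balls $B_M(s,r_s/4)$ give both the degree bound and the localized count with $b_0=1$. This is a genuine alternative to the paper. The paper takes $S$, goodness, the degree bound and $|S|\le C_0\vol(M)$ from \cite{FraczykHurtadoRaimbaultTopComplexity}, and only derives the localized count, via the bump functions $F_s$ from their Lemma~6.3.

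The gap is the linear bound. Theorem~A of \cite{FraczykHurtadoRaimbaultTopComplexity} is a statement about their construction, so for your own maximal set you need $\int_M\varepsilon_x^{-N}\,\dd x\le C_X\vol(M)$. The route you sketch for this would not work. The inequality does not follow from the local structure of thin parts; it is a Lehmer-type statement about how short closed geodesics can be relative to volume. Already for closed hyperbolic $3$-manifolds obtained by Dehn filling, whose thin parts are standard Margulis tubes, it fails. The volume stays bounded, while the tube around a core geodesic of length $\ell\to0$ contributes $\gg\ell^{-1/2}$ to $\int\varepsilon_x^{-3}$. The reason is Dirichlet's theorem: some power of the core element moves points at distance $r\in[\ell,1]$ from the core by $O(\sqrt{r\ell})$. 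So no dyadic comparison of a tube with its surrounding thick shell can give the bound. Your fallback of changing the radii is also unavailable, since $r_s=\varepsilon_s/6$ is part of the statement (and $r_s\le1/6$ is used in \cref{prop:bs-torsion-compact-estimate}).

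The missing input is arithmetic, and it is exactly what the paper uses in \cref{lem:compact-weighted-thin-bs-log}. By Margulis arithmeticity, $\Gamma$ has an adjoint trace field of some degree $d$. Dobrowolski's bound gives $\Inj(M)\ge\kappa_X/(\log 2d)^3$. Theorem~D of \cite{FraczykHurtadoRaimbaultTopComplexity} gives $\vol(M_{<\eta_1 d})\le e^{-cd}\vol(M)$ once $d$ is large. For $d$ below a threshold depending only on $X$, the first bound makes $\varepsilon_x^{-N}$ uniformly bounded. For large $d$ (so that also $\eta_1 d\ge1$) one gets
\[
 \int_M\varepsilon_x^{-N}\,\dd x
 \le\vol(M)+\kappa_X^{-N}(\log 2d)^{3N}\vol(M_{<1})
 \le\bigl(1+\kappa_X^{-N}(\log 2d)^{3N}e^{-cd}\bigr)\vol(M),
\]
which is at most $C_X\vol(M)$. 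With this estimate your argument becomes a complete proof with an explicitly constructed $S$. Alternatively, keep their set, quote Theorem~A, and prove only the localized count, as the paper does.
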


\begin{proof}
The construction of \(S\), the good-cover property, and the bounded-degree
property are contained in
\cite[Proposition~6.1 and Lemma~6.3]{FraczykHurtadoRaimbaultTopComplexity}; the
linear bound on \(|S|\) is \cite[Theorem~A]{FraczykHurtadoRaimbaultTopComplexity}.

For the localized estimate, use the functions from the proof of
\cite[Lemma~6.3]{FraczykHurtadoRaimbaultTopComplexity}.  For \(s\in S\), set
\[
 F_s(x)=
 \begin{cases}
  \vol(B_M(s,\varepsilon_s/2))^{-1},
       &x\in B_M(s,\varepsilon_s/4),\\
  0,&\text{otherwise}.
 \end{cases}
\]
That proof shows that \(\int_M F_s\,\dd x\asymp_X 1\) and
\(\sum_{s\in S}F_s(x)\le C_X\varepsilon_x^{-N}\).  If
\(\inj_s(M)<A\) and \(x\in\operatorname{supp}F_s\), then the
\(1\)-Lipschitz property of the injectivity radius gives
\(\inj_x(M)<A+1\).  Summing over these centres and integrating
proves the last estimate, after increasing the constants.
\end{proof}

\begin{proposition}\label{prop:bs-torsion-compact-estimate}
There are constants \(C_X,b_X,T_X<\infty\), depending only on \(X\), such that
every compact torsion-free irreducible \(X\)-manifold \(M\) satisfies, for
every \(t\ge T_X\),
\[
        \frac{\log |H_1(M;\mathbb Z)_{\tors}|}{\vol(M)}
        \le C_X\left(
        t^{-m/2}(\log t)^2
        +\frac{\log t}{\vol(M)}
        \int_{M_{<3\rho(t)+b_X}}\varepsilon_x^{-N}\,\dd x
        \right).
\]
\end{proposition}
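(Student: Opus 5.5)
We mirror the proof of \cref{prop:bs-torsion-noncompact-estimate}, with the Fr\k{a}czyk--Hurtado--Raimbault cover of \cref{thm:fhr-variable-radius-cover} playing the role of the scaffold. The manifold \(M\) is compact, so we can take \(C=U=M\) in \cref{lem:torsion-from-core-cover}. We therefore need a random good cover of all of \(M\) whose nerve has few edges on average.

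\textbf{Parameters and construction.} Fix \(t\ge T_X\) and let \(S\) and \(r_s=\varepsilon_s/6\le 1/6\) be as in \cref{thm:fhr-variable-radius-cover}. Set \(\rho=\rho(t)\) and
\[
 a=\rho+1,\qquad \omega=2a+1,\qquad A=\omega+\rho .
\]
Sample a Poisson process \(\calY_t\) of intensity \(v(t)^{-1}\) on \(W=M_{\ge\omega}\). A centre \(s\in S\) is \emph{thin} if \(\inj_s(M)<A\). If it is not thin, then \(\varepsilon_s=1\) and \(B_M(s,\rho)\subset W\) is embedded. Such a centre is \emph{missed} if this ball contains no Poisson point; thin and missed centres are \emph{exceptional}. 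The cover is
\[
 \calU_t=\{B_M(y,a):y\in\calY_t\}\cup\{B_M(s,r_s):s\text{ exceptional}\}.
\]
It covers \(M\): a point \(p\) lies in some \(B_M(s,r_s)\). If \(s\) is exceptional, that ball is in \(\calU_t\). Otherwise some Poisson point \(y\) satisfies \(d(y,s)<\rho\), so \(d(p,y)<\rho+1/6<a\).

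\textbf{Good cover.} Take a nonempty finite intersection. If it involves a Poisson ball, use it as the reference. Every other centre is within \(2a<\omega\le\inj_y(M)\) of \(y\), and every radius is at most \(a\), so \cref{eq:lifting} applies. If it involves only FHR balls, contractibility is inherited from the good-cover property of \(\mathcal V\) in \cref{thm:fhr-variable-radius-cover}, because a subfamily of a good cover has contractible intersections. Hence \(\log|H_1(M;\mathbb Z)_{\tors}|\le E_t\log 3\).

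\textbf{Edge counts.} The Poisson--Poisson edges are bounded by \cref{lem:mecke} and \eqref{eq:prox-volume}, exactly as before, giving \(C_X\vol(M)t^{-m/2}(\log t)^2\). Scaffold--scaffold edges are at most \(\Delta\) times the number of exceptional centres, using the degree bound. The expected number of missed non-thin centres is at most \(|S|t^{-Q_0}\le C_0\vol(M)t^{-Q_0}\). The number of thin centres is at most
\[
 C_0\int_{M_{<A+b_0}}\varepsilon_x^{-N}\dd x ,
\]
and \(A+b_0\le 3\rho+b_X\) for a suitable \(b_X\). For mixed edges, an FHR ball of radius at most \(1/6\) meets \(B(y,a)\) only if \(y\in B_M(s,\rho+2)\). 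The expected number of Poisson points there is at most \(v(\rho+2)/v(t)\le C_X\log t\) by \eqref{eq:rho-volume}, and this holds even for a non-embedded ball, since quotient balls have volume at most \(v\). For a missed non-thin centre, independence on the annulus gives the extra factor \(t^{-Q_0}\), as in \cref{prop:edge-estimate-thick}. Summing, and absorbing the \(t^{-Q_0}\) terms into the main term, proves the proposition.

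\textbf{Expected obstacle.} The main difficulty is the good-cover verification for intersections that mix a Poisson ball with small FHR balls near the thin part. The choice of \(W=M_{\ge\omega}\) with \(\omega>2a\) is precisely what makes \cref{eq:lifting} available there, so I expect this step to go through once that inequality is checked carefully.
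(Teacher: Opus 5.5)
Your proposal is correct and follows the paper's proof essentially step for step. You use the same parameters \(a=\rho+1\), \(\omega=2a+1\), \(A=\omega+\rho\), the same Poisson process on \(W=M_{\ge\omega}\), the same thin/missed classification of the centres from \cref{thm:fhr-variable-radius-cover}, the same covering and lifting arguments, and the same three-way edge count. The only cosmetic difference is that you obtain the torsion bound from \cref{lem:torsion-from-core-cover} with \(C=U=M\) rather than applying \cref{lem:topology-from-nerve} directly, which amounts to the same thing; note also that \(T_X\) should be chosen so that \cref{lem:log-radii} holds with \(B_0=2\), which your argument uses implicitly.
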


\begin{proof}
Let \(S\) and \(\mathcal V\) be as in
\cref{thm:fhr-variable-radius-cover}.  Choose \(T_X\) so that
\cref{lem:log-radii} applies with \(B_0=2\).  For
\(t\ge T_X\), abbreviate \(\rho=\rho(t)\), and put
\[
        a=\rho+1,
        \qquad
        \omega=2a+1=2\rho+3,
        \qquad
        A=\omega+\rho.
\]
Let \(W=M_{\ge\omega}\), and sample on \(W\) a Poisson point process
\(\calY_t\) of intensity \(v(t)^{-1}\).

Call \(s\in S\) \emph{thin} if \(\inj_s(M)<A\).  If \(s\) is
not thin, then \(B_M(s,\rho)\) is embedded and contained in \(W\).  Such a
centre is called \emph{missed} if this ball contains no point of \(\calY_t\),
and \emph{covered} otherwise.  For every non-thin centre,
\begin{equation}\label{eq:compact-miss-prob}
        \bbP(s\text{ is missed})
        =\exp\left(-\frac{v(\rho)}{v(t)}\right)
        \le t^{-Q_0}.
\end{equation}
Define
\[
        \calU_t
        =\{B_M(y,a):y\in\calY_t\}
        \cup
        \{B_M(s,r_s):s\in S\text{ is thin or missed}\}.
\]

The family \(\calU_t\) covers \(M\).  Indeed, if
\(p\in B_M(s,r_s)\) for a ball in the original cover \(\mathcal V\), then
either this ball is retained, or \(s\) is covered and there is a Poisson point
\(y\in B_M(s,\rho)\).  In the latter case,
\(d_M(p,y)<r_s+\rho<a\).

The cover is good.  An intersection involving only retained balls is an
intersection from the good cover \(\mathcal V\).  If an intersection contains
a Poisson ball centred at \(y\in W\), use that ball as the reference.  Every
other centre in the intersection is at distance less than \(2a<\omega\) from
\(y\), so all balls lift uniquely to a common collection of convex balls in
\(X\).  The intersection is therefore contractible.  By
\cref{lem:topology-from-nerve},
\[
        \log |H_1(M;\mathbb Z)_{\tors}|\le E_t\log3,
\]
where \(E_t\) is the number of edges of \(N(\calU_t)\).

As before, write \(E_t^{\mathrm{PP}}\), \(E_t^{\mathrm{SS}}\), and
\(E_t^{\mathrm{PS}}\) for the three edge types.  Since a Poisson--Poisson edge
has endpoints at distance less than \(2a=2\rho+2\),
\cref{lem:mecke,eq:prox-volume} give
\[
        \bbE E_t^{\mathrm{PP}}
        \le C_X\vol(M)t^{-m/2}(\log t)^2.
\]

By \cref{thm:fhr-variable-radius-cover}, after increasing \(b_X\), the number
of thin centres is at most
\[
        C_X\int_{M_{<3\rho+b_X}}\varepsilon_x^{-N}\,\dd x.
\]
The expected number of missed non-thin centres is at most
\(C_X\vol(M)t^{-Q_0}\), by \cref{eq:compact-miss-prob} and the bound
\(|S|\le C_X\vol(M)\).  Since the exceptional--exceptional edges form a
subgraph of the bounded-degree nerve of \(\mathcal V\),
\[
        \bbE E_t^{\mathrm{SS}}
        \le C_X\left(
        \int_{M_{<3\rho+b_X}}\varepsilon_x^{-N}\,\dd x
        +\vol(M)t^{-Q_0}
        \right).
\]

A mixed edge incident to \(s\) requires a Poisson point in
\(B_M(s,\rho+2)\).  A thin centre contributes at most \(C_X\log t\) in
expectation, by \cref{eq:rho-volume}.  If \(s\) is non-thin and missed,
independence between its radius-\(\rho\) ball and the surrounding annulus gives
an expected contribution at most \(C_X t^{-Q_0}\log t\).  Consequently,
\[
        \bbE E_t^{\mathrm{PS}}
        \le C_X\left(
        \log t\int_{M_{<3\rho+b_X}}\varepsilon_x^{-N}\,\dd x
        +\vol(M)t^{-Q_0}\log t
        \right).
\]
Combining the estimates and absorbing the terms involving \(t^{-Q_0}\) into
the Poisson--Poisson term proves the proposition.
\end{proof}

\subsection{The compact weighted thin part}

\begin{lemma}\label{lem:compact-weighted-thin-bs-log}
Let \(M_n=\Gamma_n\backslash X\) be a compact torsion-free irreducible
BS-convergent sequence.  Every subsequence has a further subsequence, still
denoted \(M_n\), and numbers \(t_n\to\infty\) such that
\[
        \frac{\log t_n}{\vol(M_n)}
        \int_{(M_n)_{<3\rho(t_n)+b_X}}
             \varepsilon_{n,x}^{-N}\,\dd x
        \longrightarrow0,
\]
where \(b_X\) is the constant from
\cref{prop:bs-torsion-compact-estimate} and
\(\varepsilon_{n,x}=\min\{1,\inj_x(M_n)\}\).
\end{lemma}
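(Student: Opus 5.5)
The plan has two steps. First, for each \emph{fixed} radius \(L\), show that the weighted thin part is negligible, that is,
\[
 f_n(L):=\frac{1}{\vol(M_n)}\int_{(M_n)_{<L}}\varepsilon_{n,x}^{-N}\,\dd x\longrightarrow0 .
\]
Second, choose \(t_n\to\infty\) by a diagonal argument slowly enough to absorb the extra factor \(\log t_n\) and the growing radius \(3\rho(t_n)+b_X\).

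For the first step, split \((M_n)_{<L}\) into two pieces. On \((M_n)_{<L}\setminus(M_n)_{<1}\) we have \(\varepsilon_{n,x}=1\), so this piece contributes at most \(\vol((M_n)_{<L})/\vol(M_n)\), which tends to \(0\) by BS-convergence. The other piece, \((M_n)_{<1}\), is the genuine difficulty, since the weight \(\varepsilon^{-N}\) is unbounded there. Here I would aim for a localized version of the estimate underlying \(|S|\le C_0\vol(M)\) in \cref{thm:fhr-variable-radius-cover}, namely
\[
 \int_{M_{<1}}\varepsilon_x^{-N}\,\dd x\le C_X\vol(M_{<b})
\]
for a constant \(b=b_X\). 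Granting this, \(f_n(L)\le(1+C_X)\vol((M_n)_{<\max(L,b)})/\vol(M_n)\to0\) by BS-convergence.

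To prove the localized inequality, I would redo the local argument of \cite[Lemma~6.3]{FraczykHurtadoRaimbaultTopComplexity} (or of Theorem~A there) component by component in the thin part. Take a point \(x\) with \(\inj_x(M)=\varepsilon<1\). Through the Margulis lemma, the subgroup generated by the short loops at \(x\) is virtually nilpotent. The local structure of the region \(\{\inj<1\}\) near \(x\) then shows that the sublevel sets \(\{\inj<\varepsilon\}\) occupy a proportion \(\lesssim\varepsilon^{N}\) of the volume of a unit-scale neighbourhood. Equivalently, the \(\varepsilon^{-N}\)-weighted volume of a unit ball around a thin point is bounded by a constant times its ordinary volume, uniformly in \(X\). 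Covering \(M_{<1}\) by boundedly overlapping unit balls, all contained in \(M_{<2}\), then gives the inequality with \(b=2\) (or a slightly larger constant). This is the step I expect to be the main obstacle. If the local computation is not directly extractable from \cite{FraczykHurtadoRaimbaultTopComplexity}, the fallback is the bounded-degree good cover \(\mathcal V\) itself. Each \(F_s\) has integral \(\asymp1\), and \(\sum_sF_s\le C\varepsilon^{-N}\). Assuming their proof also gives the reverse comparison \(\varepsilon_x^{-N}\lesssim\sum_sF_s(x)\) on average, we would get \(\int_{M_{<1}}\varepsilon^{-N}\lesssim\#\{s:\inj_s<2\}\). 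Every such centre carries a disjoint chunk of \(M_{<3}\) of volume bounded below, and the count would then be controlled by \(\vol(M_{<3})\).

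For the second step, let \(\phi(t)=3\rho(t)+b_X\). Given a subsequence, \(f_n(\phi(k))\to0\) for each integer \(k\ge T_X\). I would choose indices \(n_1<n_2<\cdots\) with \(f_{n}(\phi(k))\le 1/(k\log k)\) for all \(n\ge n_k\), and set \(t_n=k\) for \(n_k\le n<n_{k+1}\). Then \(t_n\to\infty\) and \(\log t_n\,f_n(\phi(t_n))\le1/t_n\to0\), which is the claimed convergence. Passing to a further subsequence is not even needed beyond the one already given, but it costs nothing and matches the statement.
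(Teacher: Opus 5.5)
Your second step, the diagonal choice of $t_n$, is fine and is exactly what the paper does in one of its cases. The first step, however, has a genuine gap.

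\textbf{The local claim is false.} The inequality $\int_{M_{<1}}\varepsilon_x^{-N}\,\dd x\le C_X\vol(M_{<b})$, and the local statement you want to derive it from, do not follow from Margulis-lemma geometry.
\begin{itemize}
\item Suppose some $\gamma\in\Gamma$ moves a lift $\widetilde x$ by $2\varepsilon_0\ll1$ and is $O(\varepsilon_0)$-close to the identity in $G$, for instance a short transvection along a geodesic through $\widetilde x$.
\item Writing $\gamma=\exp(Z)$, its displacement at distance $d$ is at most $|\mathrm{Ad}(g^{-1})Z|\lesssim\varepsilon_0e^{C_Xd}$. Hence $\inj_y(M)\le C_X\varepsilon_0$ on all of $B_M(x,1)$.
\item So $\{\inj\le C_X\varepsilon_0\}$ fills the whole unit neighbourhood, not a proportion $\lesssim\varepsilon_0^{N}$ of it. The weighted volume of $B_M(x,1)$ is then at least $(C_X\varepsilon_0)^{-N}$ times its ordinary volume.
\item The global inequality fails in the same way. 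In the real-hyperbolic tube model around such a geodesic, the weighted integral is of order $\varepsilon_0^{1-N}$, while the tube out to $\inj=b$ has volume only of order $\varepsilon_0^{2-N}$. No uniform $C_X$ can exist unless $\varepsilon_0$ is bounded below.
\end{itemize}
BS-convergence says nothing about how small $\inj$ can get. A uniform lower bound on systoles of compact higher-rank arithmetic manifolds is (a form of) Margulis's uniform discreteness conjecture, which is of Lehmer type and open.

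Your fallback fails for the same reason. The balls $B_M(s,\varepsilon_s/4)$ have volume $\asymp\varepsilon_s^{N}$, not volume bounded below. That is precisely why \cref{thm:fhr-variable-radius-cover} bounds the number of thin centres by the \emph{weighted} integral rather than by the thin volume. Also, the bound $|S|\le C_0\vol(M)$ is itself an arithmetic theorem of Fr\k{a}czyk--Hurtado--Raimbault, not a local one.

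\textbf{What is missing is arithmetic input.} By Margulis arithmeticity, pass to a subsequence along which the degree $d_n$ of the adjoint trace field is either bounded or tends to infinity.
\begin{itemize}
\item If $d_n$ is bounded, Dobrowolski's estimate gives $\Inj(M_n)\ge\iota>0$. Then $\varepsilon_{n,x}^{-N}\le C_X$, and your diagonal argument applies verbatim.
\item If $d_n\to\infty$, one only has $\Inj(M_n)\ge\kappa_X/(\log 2d_n)^3$. The weight can then be as large as $C_X(\log 2d_n)^{3N}$, and BS-convergence, which comes with no rate, cannot absorb this. Instead one uses Theorem~D of Fr\k{a}czyk--Hurtado--Raimbault, $\vol((M_n)_{<\eta_1d_n})\le e^{-cd_n}\vol(M_n)$. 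Take $t_n=\lfloor\eta_1d_n/10\rfloor$, so that $3\rho(t_n)+b_X\le\eta_1d_n$. The expression is then at most $C_X\log t_n\,(\log 2d_n)^{3N}e^{-cd_n}\to0$.
\end{itemize}
In the second case, $t_n$ is dictated by $d_n$ rather than chosen diagonally. Your Step~1 conclusion ($f_n(L)\to0$ for fixed $L$) is in fact true, but only as a consequence of this exponential thin-part bound, not of local geometry.
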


\begin{proof}
By Margulis arithmeticity, the lattices \(\Gamma_n\) are arithmetic.  Let
\(k_n\) be the adjoint trace field of \(\Gamma_n\), and put
\(d_n=[k_n:\mathbb Q]\).  After passing to a subsequence, either \(d_n\) is
bounded or \(d_n\to\infty\).

Suppose first that \(d_n\) is bounded.  The Dobrowolski injectivity-radius
estimate used in
\cite[Proposition~2.3]{FraczykHurtadoRaimbaultTopComplexity} gives a uniform
lower bound \(\Inj(M_n)\ge\iota>0\).  Hence
\(\varepsilon_{n,x}^{-N}\le C_X\).  Let \(j_0\) be an integer at least as
large as the threshold in \cref{prop:bs-torsion-compact-estimate}.  For each
integer \(j\ge j_0\), BS convergence gives
\[
        \frac{\vol((M_n)_{<3\rho(j)+b_X})}{\vol(M_n)}
        \longrightarrow0.
\]
Choose a strictly increasing sequence of indices \(N_j\), \(j\ge j_0\),
such that, for \(n\ge N_j\), this ratio is at most
\(1/(j\log(j+2))\).  Set \(t_n=j\) when \(N_j\le n<N_{j+1}\), and define
\(t_n\) arbitrarily for the finitely many earlier indices.  Then
\(t_n\to\infty\), and the expression in the
statement is at most
\(C_X\log t_n/(t_n\log(t_n+2))\), which tends to zero.

Suppose now that \(d_n\to\infty\).  By
\cite[Theorem~D]{FraczykHurtadoRaimbaultTopComplexity}, there are constants
\(c,\eta_1>0\), depending only on \(X\), such that
\[
        \vol((M_n)_{<\eta_1d_n})
        \le e^{-cd_n}\vol(M_n)
\]
for all sufficiently large \(n\).  The same Dobrowolski estimate gives
\(\Inj(M_n)\ge \kappa_X/(\log(2d_n))^3\).  Put
\(t_n=\lfloor\eta_1d_n/10\rfloor\).  Since
\(\rho(t)=t+O_X(\log\log t)\), one has
\(3\rho(t_n)+b_X\le\eta_1d_n\) for all large \(n\).  Therefore
\[
\begin{aligned}
        &\frac{\log t_n}{\vol(M_n)}
        \int_{(M_n)_{<3\rho(t_n)+b_X}}
             \varepsilon_{n,x}^{-N}\,\dd x \\
        &\qquad\le
        C_X\log t_n\,(\log(2d_n))^{3N}e^{-cd_n}
        \longrightarrow0.
\end{aligned}
\]
\end{proof}

\begin{proof}[Proof of \cref{thm:intro-bs-h1-torsion}]
It is enough to prove that every subsequence has a further subsequence along
which the normalized torsion tends to zero.  After passing to a subsequence,
assume that all \(M_n\) are noncompact or that all are compact.

In the noncompact case, let \(b_X\) be as in
\cref{prop:bs-torsion-noncompact-estimate}.  A diagonal argument using BS
convergence gives numbers \(t_n\to\infty\), all above the threshold in
\cref{prop:bs-torsion-noncompact-estimate}, such that
\[
        \log t_n\,
        \frac{\vol((M_n)_{<3\rho(t_n)+b_X})}{\vol(M_n)}
        \longrightarrow0.
\]
For example, one may choose \(t_n=j\) on successive ranges of indices on which
the displayed thin-part ratio is at most \(1/(j\log(j+2))\).  Applying
\cref{prop:bs-torsion-noncompact-estimate} with these \(t_n\) gives the desired
conclusion, since \(t_n^{-m/2}(\log t_n)^2\to0\).

In the compact case, let \(b_X\) be as in
\cref{prop:bs-torsion-compact-estimate}.  By
\cref{lem:compact-weighted-thin-bs-log}, after passing to a further
subsequence there are \(t_n\to\infty\) such that
\[
        \frac{\log t_n}{\vol(M_n)}
        \int_{(M_n)_{<3\rho(t_n)+b_X}}
             \varepsilon_{n,x}^{-N}\,\dd x
        \longrightarrow0.
\]
Applying \cref{prop:bs-torsion-compact-estimate} with these \(t_n\) proves the
compact case and completes the proof.
\end{proof}

\section{Core scaffolds for general lattices}\label{sec:core-scaffolds}

We now prepare the scaffold used in the proof of the rank theorem for general
BS-convergent orbifolds.  Let \(O=\Gamma\backslash X\) be the finite-volume
orbifold associated with an irreducible lattice \(\Gamma<G\), and let
\(\pi:X\to O\) be the quotient map.  For \(x\in O\), choose a lift
\(\widetilde x\in X\) and define
\[
        \inj_x(O)
        =\frac12\inf_{\gamma\in\Gamma\setminus\{1\}}
        d_X(\widetilde x,\gamma\widetilde x).
\]
This is independent of the lift and is zero at orbifold singular points.  For
\(L>0\), put
\(O_{<L}=\{x:\inj_x(O)<L\}\) and
\(O_{\ge L}=\{x:\inj_x(O)\ge L\}\).  The set \(O_{>0}\) is the
free part of the orbifold.

\begin{lemma}\label{lem:orbifold-inj-lipschitz}
The function \(x\mapsto\inj_x(O)\) is \(1\)-Lipschitz.
\end{lemma}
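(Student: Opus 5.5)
The plan is to prove the Lipschitz property on the universal cover, via the displacement functions of the group elements. For each \(\gamma\in\Gamma\setminus\{1\}\), consider the function \(f_\gamma:X\to[0,\infty)\) given by \(f_\gamma(\widetilde x)=d_X(\widetilde x,\gamma\widetilde x)\). Then \(2\inj_x(O)=\inf_{\gamma\ne1}f_\gamma(\widetilde x)\) for any lift \(\widetilde x\) of \(x\). This value is independent of the lift, because replacing \(\widetilde x\) by \(\delta\widetilde x\) conjugates \(\gamma\) to \(\delta^{-1}\gamma\delta\). Conjugation is a bijection of \(\Gamma\setminus\{1\}\), so the infimum is unchanged.

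First, I check that each \(f_\gamma\) is \(2\)-Lipschitz on \(X\). By the triangle inequality and the fact that \(\gamma\) is an isometry,
\[
 f_\gamma(\widetilde y)\le d_X(\widetilde y,\widetilde x)+d_X(\widetilde x,\gamma\widetilde x)+d_X(\gamma\widetilde x,\gamma\widetilde y)
 =f_\gamma(\widetilde x)+2d_X(\widetilde x,\widetilde y).
\]
Exchanging the roles of \(\widetilde x\) and \(\widetilde y\) gives the reverse inequality. An infimum of \(2\)-Lipschitz functions is either identically \(-\infty\) or \(2\)-Lipschitz. Here all \(f_\gamma\ge0\), so \(F=\inf_{\gamma\ne1}f_\gamma\) is finite, since \(\Gamma\ne\{1\}\). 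Hence \(F\) is \(2\)-Lipschitz on \(X\), and \(\widetilde x\mapsto\frac12F(\widetilde x)\) is \(1\)-Lipschitz.

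Next, I descend to \(O\). Let \(x,y\in O\), and recall that the quotient metric is \(d_O(x,y)=\min_{\delta\in\Gamma}d_X(\widetilde x,\delta\widetilde y)\). The minimum is attained because \(\Gamma\) acts properly discontinuously. Choose lifts \(\widetilde x\) and \(\widetilde y'=\delta\widetilde y\) with \(d_X(\widetilde x,\widetilde y')=d_O(x,y)\). Since \(\inj\) does not depend on the choice of lift, we get
\[
 |\inj_x(O)-\inj_y(O)|=\tfrac12|F(\widetilde x)-F(\widetilde y')|\le d_X(\widetilde x,\widetilde y')=d_O(x,y).
\]
If one prefers not to use attainment of the minimum, it is enough to take lifts whose distance is within \(\epsilon\) of \(d_O(x,y)\) and then let \(\epsilon\to0\).

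I do not expect a real obstacle. The only points needing care are the lift-independence of \(\inj_x(O)\) and the identification of the orbifold metric with the quotient pseudometric \(\inf_\delta d_X(\widetilde x,\delta\widetilde y)\). The argument also covers singular points, where \(F=0\), with no special treatment.
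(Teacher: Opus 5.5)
Your proof is correct and follows the same route as the paper: each displacement function \(u\mapsto d_X(u,\gamma u)\) is \(2\)-Lipschitz, so half their infimum over \(\gamma\ne1\) is \(1\)-Lipschitz and \(\Gamma\)-invariant, and it therefore descends to \(O\). You also spell out two points the paper leaves implicit: the lift-independence via conjugation, and the identification of \(d_O\) with the infimum of \(d_X\) over lifts.
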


\begin{proof}
For each fixed \(\gamma\in\Gamma\), the displacement function
\(u\mapsto d_X(u,\gamma u)\) is \(2\)-Lipschitz.  Hence
\(u\mapsto\frac12d_X(u,\gamma u)\) is \(1\)-Lipschitz, and so is the infimum
over \(\gamma\ne1\).  This function is \(\Gamma\)-invariant and therefore
descends to \(O\).
\end{proof}

\begin{theorem}[Gelander core]\label{thm:gelander-core-orbifold}
There is a constant \(\alpha_X>0\), depending only on \(X\), such that every
irreducible lattice \(\Gamma<G\) admits a compact subset
\(\mathfrak C_O\subset O=\Gamma\backslash X\) with the following properties:
\begin{enumerate}[label=(\roman*),leftmargin=2.4em]
    \item \(\mathfrak C_O\subset O_{\ge\alpha_X}\);
    \item its full inverse image
    \(\widetilde{\mathfrak C}_O=\pi^{-1}(\mathfrak C_O)\) is nonempty and
    connected;
    \item the restriction
    \(\widetilde{\mathfrak C}_O\to\mathfrak C_O\) is a regular covering with
    deck group \(\Gamma\).  In particular,
    \(\pi_1(\mathfrak C_O)\twoheadrightarrow\Gamma\).
\end{enumerate}
\end{theorem}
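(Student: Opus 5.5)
The plan is to derive the statement from Gelander's thick–thin decomposition for general irreducible lattices, as in \cite{GelanderHomotopy}, adapted to orbifolds. We do not invoke \cref{lem:gelander-noncompact-thick-core}, which is stated only for manifolds.

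\emph{The case \(\Gamma\) torsion-free.} Set \(\mathfrak C_O=C_M\) from \cref{lem:gelander-noncompact-thick-core} when \(O\) is noncompact, and \(\mathfrak C_O=O\) when \(O\) is compact. Property (i) is not immediate in the compact case, since a compact manifold may have thin parts. So in both cases we use Gelander's core instead: a compact connected \(\mathfrak C\subset O_{\ge\alpha_X}\) such that \(\mathfrak C\hookrightarrow O\) is \(\pi_1\)-surjective. Gelander's construction deformation retracts the thin part, namely the Margulis tubes and cusps, onto its boundary. Compact Margulis tubes in rank \(\ge2\) irreducible quotients have boundary inclusions that are \(\pi_1\)-surjective onto the tube's fundamental group, by the rank-\(\ge2\) structure of the tubes. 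Given this \(\pi_1\)-surjectivity, (ii) and (iii) follow formally. The preimage of a connected subset of the free part is a union of \(\Gamma\)-translates of one component \(\widetilde{\mathfrak C}_0\). Its stabilizer \(\Gamma_0\) is exactly the image of \(\pi_1(\mathfrak C)\to\Gamma\). Surjectivity gives \(\Gamma_0=\Gamma\), so \(\widetilde{\mathfrak C}_O\) is connected. Freeness of the action on \(\pi^{-1}(O_{>0})\) then makes the restriction a regular covering with deck group \(\Gamma\).

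\emph{The general case.} By Selberg's lemma, choose a torsion-free normal finite-index subgroup \(\Gamma'\lhd\Gamma\), and build the core equivariantly. First, note that \(\inj_x(O)>0\) for every \(x\in O_{\ge\alpha_X}\), so this set lies in the free part. Second, Margulis' lemma with constant \(\varepsilon_X\) shows that, at points of the \(\alpha_X\)-thin part, the group generated by short elements of \(\Gamma\) is virtually nilpotent. This gives a \(\Gamma\)-invariant decomposition of \(X\) into the thick set and the thin components, which are stabilized by the corresponding virtually nilpotent subgroups. Third, we run Gelander's retraction \(\Gamma\)-equivariantly on \(X\). Each thin component \(\Omega\) with stabilizer \(\Gamma_\Omega\) is pushed onto its boundary, and this push is \(\Gamma_\Omega\)-equivariant because it is defined from the displacement functions of \(\Gamma_\Omega\), which are conjugation-invariant. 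The result is a \(\Gamma\)-invariant closed subset \(\widetilde{\mathfrak C}\subset\pi^{-1}(O_{\ge\alpha_X})\) onto which \(X\) deformation retracts \(\Gamma\)-equivariantly.

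Since \(X\) is connected and retracts onto \(\widetilde{\mathfrak C}\), the set \(\widetilde{\mathfrak C}\) is connected, which is (ii). Compactness of \(\mathfrak C_O=\pi(\widetilde{\mathfrak C})\) comes from Gelander's finiteness of the thick part modulo \(\Gamma\): the thick part is compact because cusps are thin. Property (iii) follows because \(\Gamma\) acts freely and properly on \(\widetilde{\mathfrak C}\), since the set lies in the free part. Hence \(\widetilde{\mathfrak C}\to\mathfrak C_O\) is a regular covering with deck group \(\Gamma\), and connectedness of the cover yields \(\pi_1(\mathfrak C_O)\twoheadrightarrow\Gamma\).

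The main obstacle is the equivariant retraction in the presence of torsion. Singular points have \(\inj=0\), and finite-order elements can fix points deep inside what would be the thick part. We must check that every finite-order element of \(\Gamma\) has fixed set inside the \(\alpha_X\)-thin region and that the retraction removes it. The first check is automatic, since fixed points have \(\inj=0<\alpha_X\). The second requires that the retraction of a thin component containing elliptic elements still lands in the region where all nontrivial elements displace by at least \(2\alpha_X\). I would try to handle this with Gelander's argument on the \(\varepsilon\)-thin sets \(\{u: d(u,\gamma u)<\varepsilon\}\). These are convex, including for elliptic \(\gamma\), and we use them together with a choice of \(\alpha_X\) much smaller than the Margulis constant, so that pushing outward across all these convex sets ends at displacement at least \(2\alpha_X\).
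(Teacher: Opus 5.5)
Your general case depends on a $\Gamma$-equivariant deformation retraction of $X$ onto a $\Gamma$-invariant set $\widetilde{\mathfrak C}\subset\pi^{-1}(O_{\ge\alpha_X})$, and no such retraction can exist. Suppose $\gamma\in\Gamma\setminus\{1\}$ has finite order. By Cartan's fixed point theorem it fixes some $u\in X$. The time-one map $r$ of an equivariant retraction then satisfies $\gamma\,r(u)=r(\gamma u)=r(u)$. So $r(u)\in\widetilde{\mathfrak C}$ is fixed by $\gamma$, which contradicts the freeness of the action over $O_{\ge\alpha_X}$. The difficulty you flag at the end is therefore not one that a small $\alpha_X$ can absorb. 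Equivariance keeps $\operatorname{Fix}(\gamma)$ inside $\operatorname{Fix}(\gamma)$, and that is also exactly where the displacement gradients you want to push along are undefined.

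The retraction fails even without torsion. Take $\Gamma$ torsion-free and cocompact with nonempty $\alpha_X$-thin part. The retraction would descend to a deformation retraction of the closed manifold $O$ onto a proper compact subset $C$. Then $C\hookrightarrow O$ would induce an isomorphism on $H_N(-;\mathbb Z/2)$. But this inclusion factors through $H_N(O\setminus\{p\};\mathbb Z/2)=0$ for any $p\notin C$, while $H_N(O;\mathbb Z/2)\neq0$.

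The same argument refutes your torsion-free claim that Gelander's construction deformation retracts compact thin components onto their boundaries. What you actually need there is $\pi_1$-surjectivity across compact thin components, and you only assert it from ``the rank-$\ge2$ structure of the tubes''. Also, \cref{lem:gelander-noncompact-thick-core} covers noncompact manifolds only. Finally, you invoke Selberg's lemma but never use it, and it cannot help: fixed points of torsion in $\Gamma$ may lie deep in the $\Gamma'$-thick part.

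The missing idea is to give up retracting $X$, aim only for connectivity, and first excise a singular set. The paper does not reprove this; it takes it from Gelander's rank paper \cite{GelanderRank}, which, unlike \cite{GelanderHomotopy}, allows torsion. That paper provides a $\Gamma$-invariant singular set $\widetilde N\subset X$ and a $\Gamma$-invariant Morse function $\widetilde\psi$ on $X\setminus\widetilde N$. Its sublevel set $\widetilde\psi_{\le0}$ is nonempty and connected, has compact quotient, and lies over $O_{\ge\epsilon/(2\mu)}$. The paper sets $\mathfrak C_O=\psi_{\le0}$ and $\alpha_X=\epsilon/(2\mu)$, and deduces (iii) from freeness just as you do.

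Working off a singular set is what evades the obstructions above. An equivariant flow argument landing in the free part can only run on the complement of all fixed-point sets. Point stabilizers of the connected group $G$ act on tangent spaces through $\mathrm{SO}$, so these fixed-point sets are totally geodesic submanifolds of codimension at least $2$. Their locally finite union therefore does not disconnect $X$. On the complement, equivariance no longer forces any point into a fixed-point set, and connectivity is all that (ii) requires.
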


\begin{proof}
Gelander constructs a \(\Gamma\)-invariant singular set
\(\widetilde N\subset X\) and a \(\Gamma\)-invariant Morse function
\(\widetilde\psi\) on \(X\setminus\widetilde N\); see
\cite[Sections~2.3--2.6]{GelanderRank}.  In the irreducible case, the lifted
sublevel set \(\widetilde\psi_{\le0}\) is nonempty and connected, its quotient
is compact, and every point of the quotient has injectivity radius at least
\(\epsilon/(2\mu)\), for constants depending only on \(G\); see
\cite[Theorem~2.9, Corollaries~2.10--2.11 and 2.13, and
Remark~2.12]{GelanderRank}.  Taking
\(\mathfrak C_O=\psi_{\le0}\) and \(\alpha_X=\epsilon/(2\mu)\) gives the
first two assertions.  Since \(\mathfrak C_O\) lies in the free part, the
restriction of the quotient map is a regular covering with deck group
\(\Gamma\), which gives the last assertion.
\end{proof}

Fix once and for all \(0<\eta<\alpha_X/10\).  Choose a maximal
\(\eta\)-separated set
\(\calX=\{x_i:i\in I\}\subset\mathfrak C_O\).  The closed balls
\(\overline B_O(x_i,\eta)\) cover \(\mathfrak C_O\).  Define a finite graph
\(H\) with vertex set \(\calX\) by joining distinct vertices whose distance in
\(O\) is at most \(2\eta\).  Each edge is represented by its unique minimizing
geodesic segment.  Let \(\widehat H\) be the full lift of this graph to \(X\).
Equivalently, its vertices are the full lifts of the points of \(\calX\), and
two distinct lifted vertices are joined when their distance in \(X\) is at
most \(2\eta\).

\begin{lemma}[Core scaffold]\label{lem:core-graph}
The graph \(\widehat H\) is connected.  There is a constant
\(\Delta_X<\infty\), depending only on \(X\), such that every vertex of
\(\widehat H\) has degree at most \(\Delta_X\).  Consequently,
\[
        |E(H)|\le \Delta_X|I|,
        \qquad
        |I|\le \frac{\vol(O)}{v(\eta/2)}.
\]
Moreover, every geodesic segment representing an edge of \(H\) lies in the
free part of \(O\).
\end{lemma}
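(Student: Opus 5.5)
I would prove the four assertions in the following order: the geometric facts (edges in the free part, bounded degree), then the counting bounds, and finally connectedness, which is the main point.

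\emph{Edges lie in the free part, and bounded degree.} Every vertex \(x_i\) lies in \(\mathfrak C_O\subset O_{\ge\alpha_X}\). By \cref{lem:orbifold-inj-lipschitz}, every point within distance \(2\eta<\alpha_X/5\) of \(x_i\) has injectivity radius at least \(\alpha_X-2\eta>0\). So a minimizing segment of length at most \(2\eta\) from \(x_i\) stays in the free part. It is also unique, because \(B_O(x_i,\alpha_X)\) is isometric to a ball in \(X\), and the geodesic lifts to the unique geodesic in \(X\) between the compatible lifts.

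For the degree bound, fix a lift \(\widetilde x\) of \(x_i\); its neighbours lie in \(B_X(\widetilde x,2\eta)\). Distinct lifted vertices are at distance at least \(\eta\) in \(X\). For lifts of distinct scaffold points this holds because their projections are \(\eta\)-separated. For two lifts of the same point, \(\gamma\widetilde x\) and \(\widetilde x\), the distance is at least \(2\alpha_X>\eta\). Hence the balls \(B_X(\cdot,\eta/2)\) around neighbours are disjoint and lie in \(B_X(\widetilde x,5\eta/2)\), which gives degree at most \(\Delta_X:=v(5\eta/2)/v(\eta/2)\).

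\emph{Counting bounds.} Since \(\Gamma\) acts freely on lifts of points of \(\mathfrak C_O\), the edges of \(H\) correspond to \(\Gamma\)-orbits of edges of \(\widehat H\). Each vertex of \(H\) is incident to at most \(\Delta_X\) edges, so \(|E(H)|\le\Delta_X|I|\). The balls \(B_O(x_i,\eta/2)\) are embedded and pairwise disjoint, which gives \(|I|v(\eta/2)\le\vol(O)\).

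\emph{Connectedness of \(\widehat H\)}, the main obstacle. Use \cref{thm:gelander-core-orbifold}(ii): \(\widetilde{\mathfrak C}_O\) is connected. Let \(\widetilde{\mathcal X}\) be the set of lifted vertices, and let
\[
U=\bigcup_{\widetilde x\in\widetilde{\mathcal X}}\overline B_X(\widetilde x,\eta).
\]
This set contains \(\widetilde{\mathfrak C}_O\), because the closed \(\eta\)-balls cover \(\mathfrak C_O\) and balls of this radius lift isometrically.

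Partition the vertices according to the connected components of \(\widehat H\). Suppose two closed balls from different classes intersect. Then their centres are at distance at most \(2\eta\), so they are joined by an edge, which is a contradiction. Hence the unions over distinct classes are pairwise disjoint closed sets; they are closed because the family of balls is locally finite, by the separation shown above.

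These sets cover the connected set \(\widetilde{\mathfrak C}_O\). Each class meets it, since every vertex lies in \(\widetilde{\mathfrak C}_O\). Connectedness of \(\widetilde{\mathfrak C}_O\) therefore forces a single class, so \(\widehat H\) is connected.

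The step needing care is that \(\widetilde{\mathfrak C}_O\) is covered by closed \(\eta\)-balls about lifted vertices, together with local finiteness. The former uses maximality of the \(\eta\)-separated set inside \(\mathfrak C_O\) and the embeddedness of \(\eta\)-balls there, since \(\eta<\alpha_X\).
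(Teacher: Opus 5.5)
Your proof is correct and follows essentially the same route as the paper: packing of \(\eta/2\)-balls for the degree bound and for \(|I|\), the \(1\)-Lipschitz injectivity radius for the free-part claim, and a covering argument over the connected set \(\widetilde{\mathfrak C}_O\) for connectivity. The only difference is that you use closed \(\eta\)-balls, so you need local finiteness, and you should compare one class against the union of all the others since there may a priori be infinitely many classes; the paper instead uses the open lifted \(\eta\)-balls, for which the union over each class is automatically open and no local finiteness is needed.
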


\begin{proof}
The full lifted set of scaffold centres is \(\eta\)-separated in \(X\).  For a
fixed lifted centre, the balls of radius \(\eta/2\) around its neighbours are
pairwise disjoint and lie in the ball of radius \(5\eta/2\) around that centre.
This gives a degree bound depending only on \(X\) and \(\eta\), hence only on
\(X\).  Downstairs, the balls \(B_O(x_i,\eta/2)\) are pairwise disjoint and
embedded, so \(|I|v(\eta/2)\le\vol(O)\).  The asserted edge bound follows from
the degree bound.

The lifted open \(\eta\)-balls centred at the vertices of \(\widehat H\)
cover \(\widetilde{\mathfrak C}_O\).  Their intersections with this connected set
form an open cover with connected intersection graph.  Hence the intersection
graph of the balls, and therefore \(\widehat H\), is connected.  Finally, if
\(z\) lies on an edge segment, then it is within \(2\eta\) of an
endpoint in \(O_{\ge\alpha_X}\).  By \cref{lem:orbifold-inj-lipschitz},
\(\inj_z(O)\ge\alpha_X-2\eta>0\), so the segment lies in the
free part.
\end{proof}

\section{Sublinear rank for BS-convergent orbifolds}\label{sec:bs-rank}

\begin{proof}[Proof of \cref{thm:intro-bs-orbifold}]
Choose \(T_X\) so that \cref{lem:log-radii} applies with \(B_0=2\eta\).  For \(t\ge T_X\), put
\[
        \sigma(t)=2\rho(t)+2\eta,
        \qquad
        \omega(t)=\sigma(t)+\eta,
        \qquad
        A(t)=\omega(t)+\rho(t),
        \qquad
        D(t)=A(t)+\eta=3\rho(t)+4\eta.
\]

We first choose a scale along the sequence.  Let \(j_0=\lceil T_X\rceil\).  For
each integer \(j\ge j_0\), BS convergence gives
\(\vol((O_n)_{<D(j)})/\vol(O_n)\to0\).  Choose a strictly increasing sequence
of integers \(N_j\) such that, whenever \(n\ge N_j\),
\[
        \frac{\vol((O_n)_{<D(j)})}{\vol(O_n)}\le\frac1j.
\]
For \(n\ge N_{j_0}\), define
\(t_n=\max\{j\ge j_0:N_j\le n\}\), and define \(t_n\) arbitrarily for the
finitely many remaining indices.  Then \(t_n\to\infty\) and
\[
        \frac{\vol((O_n)_{<D(t_n)})}{\vol(O_n)}
        \le\frac1{t_n}\longrightarrow0.
\]

Fix a sufficiently large \(n\), and abbreviate
\(O=O_n\), \(\Gamma=\Gamma_n\), and \(t=t_n\).  Let
\(\mathfrak C_O\), \(H\), and \(\widehat H\) be as in
\cref{sec:core-scaffolds}.  Put \(W=O_{\ge\omega(t)}\), and sample on \(W\) a
Poisson point process \(\calY\) of intensity \(v(t)^{-1}\).

A scaffold vertex \(x_i\) is called \emph{thin} if
\(\inj_{x_i}(O)<A(t)\).  If it is not thin, then
\(B_O(x_i,\rho(t))\) is embedded and contained in \(W\).  Such a vertex is
called \emph{missed} if this ball contains no point of \(\calY\), and
\emph{covered} otherwise.  Thin and missed vertices are called
\emph{exceptional}.  For every non-thin vertex,
\begin{equation}\label{eq:orbifold-miss-prob}
        \bbP(x_i\text{ is missed})
        =\exp\left(-\frac{v(\rho(t))}{v(t)}\right)
        \le t^{-Q_0}.
\end{equation}

We define a finite random graph \(\mathcal G_t\) in \(O\).  Its Poisson
vertices are the points \(y\in\calY\) that lie in
\(B_O(x_i,\rho(t))\) for at least one covered scaffold vertex \(x_i\).  Its
scaffold vertices are the exceptional vertices and the covered vertices that
are adjacent in \(H\) to an exceptional vertex.  The graph has the following
edges:
\begin{enumerate}[label=(\roman*),leftmargin=2.4em]
    \item an edge between distinct Poisson vertices \(y,z\) whenever
    \(d_O(y,z)\le\sigma(t)\);
    \item every scaffold edge of \(H\) with at least one exceptional endpoint;
    \item for each covered scaffold vertex retained in the graph, one edge to
    a chosen Poisson point in its radius-\(\rho(t)\) ball.
\end{enumerate}
The choices in (iii) are made in the quotient and then lifted equivariantly.
The estimates below do not depend on these choices.  Edges of types (i) and
(iii) are represented by their unique minimizing geodesic segments.  Indeed,
for (i) either endpoint lies in \(O_{\ge\omega(t)}\) and
\(\sigma(t)<\omega(t)\), while for (iii) the scaffold vertex has injectivity
radius at least \(A(t)>\rho(t)\).  Let \(\widehat{\mathcal G}_t\) be the full
lift of this graph to \(X\).

We claim that \(\widehat{\mathcal G}_t\) is connected.  For this connectivity argument, classify every lifted scaffold vertex
according to its projection, whether or not that vertex is retained in
\(\widehat{\mathcal G}_t\).  If there are no covered
vertices, every edge of the connected graph \(\widehat H\) is retained, and
the claim is immediate.  Otherwise, for each covered lifted vertex
\(\widetilde x\), let
\[
        P_{\widetilde x}
        =\widetilde{\calY}\cap B_X(\widetilde x,\rho(t)),
\]
where \(\widetilde{\calY}\) is the full lift of \(\calY\).  This set is
nonempty, and any two of its points are at distance at most
\(2\rho(t)<\sigma(t)\).  If \(\widetilde x\) and \(\widetilde x'\) are
adjacent covered vertices, \(y\in P_{\widetilde x}\), and
\(z\in P_{\widetilde x'}\), then
\(d_X(y,z)\le2\rho(t)+2\eta=\sigma(t)\).  Hence all Poisson vertices associated
to a connected component of the covered subgraph of \(\widehat H\) lie in a
single component of \(\widehat{\mathcal G}_t\).

Each connected component of the exceptional subgraph of \(\widehat H\) is
retained by edges of type (ii).  Since \(\widehat H\) is connected, the
incidence graph of the covered and exceptional components is connected.  At
every edge between an exceptional component and a covered component, the
covered endpoint is retained, its scaffold edge to the exceptional component
is present, and an edge of type (iii) joins it to the corresponding Poisson
component.  Thus every adjacency in the incidence graph is realized by a path
in \(\widehat{\mathcal G}_t\), proving connectivity.

The action of \(\Gamma\) on the geometric realization of
\(\widehat{\mathcal G}_t\) is free.  Poisson vertices lie over
\(O_{\ge\omega(t)}\), and scaffold vertices lie in the free part.  Along an
edge of type (i), the injectivity radius is at least
\(\omega(t)-\sigma(t)=\eta>0\).  Along an edge of type (iii), it is at least
\(A(t)-\rho(t)=\omega(t)>0\).  Edges of type (ii) lie in the free part by
\cref{lem:core-graph}.  The quotient graph is finite almost surely, so
\cref{lem:graph-surjection} gives
\[
        d(\Gamma)\le |E(\mathcal G_t)|
\]
for almost every realization.

It remains to estimate the expected number of quotient edges.  The number of
edges of type (i) is bounded by the number of pairs of points of \(\calY\) at
distance at most \(\sigma(t)\).  Since a radius-\(\sigma(t)\) ball centred in
\(W\) is embedded, \cref{lem:mecke,eq:prox-volume} give
\[
\begin{aligned}
        \bbE\#\{\text{edges of type (i)}\}
        &\le \frac{1}{2v(t)^2}
        \int_W\vol(B_O(x,\sigma(t))\cap W)\,\dd x \\
        &\le C_X\vol(O)t^{-m/2}(\log t)^2.
\end{aligned}
\]

Let \(T\) be the set of thin scaffold vertices.  If \(x_i\in T\), then
\(B_O(x_i,\eta/2)\subset O_{<D(t)}\), by
\cref{lem:orbifold-inj-lipschitz}.  These balls are pairwise disjoint and
embedded, so
\[
        |T|\le\frac{\vol(O_{<D(t)})}{v(\eta/2)}.
\]
Because the scaffold graph has uniformly bounded degree, the number of edges
of types (ii) and (iii) arising from thin vertices is at most
\(C_X\vol(O_{<D(t)})\).

Let \(B\) be the number of missed non-thin vertices.  By
\cref{eq:orbifold-miss-prob} and \cref{lem:core-graph},
\(\bbE B\le C_X\vol(O)t^{-Q_0}\).  The expected number of edges of types (ii)
and (iii) arising from missed vertices is therefore at most
\(C_X\vol(O)t^{-Q_0}\).  Combining these estimates gives
\[
        \bbE|E(\mathcal G_t)|
        \le C_X\left(
        \vol(O)t^{-m/2}(\log t)^2
        +\vol(O)t^{-Q_0}
        +\vol(O_{<D(t)})
        \right).
\]
Applying this with \(O=O_n\) and \(t=t_n\), and dividing by \(\vol(O_n)\),
yields
\[
        \frac{d(\Gamma_n)}{\vol(O_n)}
        \le C_X\left(
        t_n^{-m/2}(\log t_n)^2+t_n^{-Q_0}
        +\frac{\vol((O_n)_{<D(t_n)})}{\vol(O_n)}
        \right).
\]
The right-hand side tends to zero, which proves the theorem.
\end{proof}

\begin{proof}[Proof of \cref{cor:intro-modp}]
For every field \(\mathbb F\), the images in
\(H_1(\Gamma_n;\mathbb F)\) of a generating set of \(\Gamma_n\) span that
vector space.  Hence
\(\dim_{\mathbb F}H_1(\Gamma_n;\mathbb F)\le d(\Gamma_n)\), uniformly in
\(\mathbb F\).  The conclusion follows from
\cref{thm:intro-bs-orbifold}.
\end{proof}
\section{Affine buildings}\label{sec:affine-buildings}

In this section we prove \cref{intro:buildings}. Since the argument is essentially the same, we highlight the main changes. Let
\(\mathcal B\) be a thick, locally finite, affine building of
Euclidean rank \(r\ge2\), equipped with its standard piecewise-Euclidean
CAT\((0)\) metric and with \(r\)-dimensional Hausdorff measure \(\mu\).
Put \(m=r-1\).  For \(x\in\mathcal B\) and \(s>0\), write
\[
 v_x(s)=\mu(B_{\mathcal B}(x,s)),
 \qquad
 v_-(s)=\inf_{x\in\mathcal B}v_x(s),
 \qquad
 v_+(s)=\sup_{x\in\mathcal B}v_x(s).
\]
Since we only consider compact quotients of \(\mathcal B\), we may assume
that \(\mathcal B\) admits a cocompact lattice.  In particular, \(\mathcal B\) has bounded local geometry.

The argument is the same scaffolded Poisson construction as before.  There
are three points that require modification.  First, the building is not assumed
homogeneous, so lower and upper ball-volume functions replace the single
function \(v(s)\).  Second, convexity of balls in the Hadamard manifold \(X\)
is replaced by convexity of balls in the CAT\((0)\) space \(\mathcal B\).
Third, the fixed polyhedral structure gives a uniform radius for the scaffold
for all torsion-free quotients. Thus, no variable-radius cover is
needed in the thin part.

\begin{lemma}
\label{lem:building-geometric-input}
Assume that \(\mathcal B\) admits a compact quotient.  There are constants
\(
 \varepsilon_{\mathcal B},a_{\mathcal B},
 A_{\mathcal B},h_{\mathcal B}>0
\)
depending only on \(\mathcal B\) such that the following hold.
\begin{enumerate}[label=(\roman*),leftmargin=2.4em]
    \item For every \(s\ge1\),
    \[
     a_{\mathcal B}e^{h_{\mathcal B}s}s^{(r-1)/2}
     \le v_-(s)
     \le v_+(s)
     \le
     A_{\mathcal B}e^{h_{\mathcal B}s}s^{(r-1)/2}.
    \]

    \item If \(\Gamma<\operatorname{Aut}(\mathcal B)\) is a torsion-free
    discrete subgroup and \(Y=\Gamma\backslash\mathcal B\), then
    \[
     \Inj(Y)\ge\varepsilon_{\mathcal B}.
    \]
    In particular, every ball of radius
    \(\varepsilon_{\mathcal B}\) in \(Y\) lifts isometrically to
    \(\mathcal B\).
\end{enumerate}
\end{lemma}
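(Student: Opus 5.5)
We treat the two parts separately, since they rest on different structural inputs.

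\emph{Part (i).} The plan is to reduce the volume count to a lattice-point count in apartments.

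\begin{enumerate}[label=(\alph*),leftmargin=2.4em]
\item Since \(\mathcal B\) admits a cocompact lattice \(\Gamma_0\), the function \(x\mapsto v_x(s)\) is \(\Gamma_0\)-invariant. Moreover, for \(d(x,x')\le D\) we have \(v_x(s)\le v_{x'}(s+D)\). So it suffices to prove the estimate at the finitely many vertices of a fundamental domain, up to shifting \(s\) by a bounded amount. A bounded shift changes \(e^{hs}s^{m/2}\) only by a multiplicative constant.

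\item At a special vertex \(o\), use the Cartan-type decomposition of \(\mathcal B\) relative to \(o\). Every point lies in a sector \(o+\overline{C}\) of some apartment containing \(o\). The number of vertices at vectorial distance \(\lambda\in\overline C\) from \(o\) is a product of thickness parameters, of the form \(\asymp q^{\langle 2\rho,\lambda\rangle}\). This is the Macdonald formula in the Bruhat--Tits case; for exotic buildings one uses the combinatorial count of Parkinson/Cartwright.

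\item Then \(v_o(s)\) is comparable to \(\int_{\overline C\cap B(0,s)} e^{\langle 2\rho',\lambda\rangle}\,d\lambda\), with \(\rho'\) an appropriately normalized weighted half-sum. This is the same Laplace-type integral that appears in Leuzinger's symmetric-space computation quoted in \cref{lem:volume}.

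\item Evaluate the integral. The linear form \(\langle2\rho',\cdot\rangle\) is maximized on the sphere at the interior direction \(\rho'/\|\rho'\|\). The Hessian in the \(r-1\) transverse directions then gives the factor \(s^{(r-1)/2}e^{\|2\rho'\|s}\). Set \(h_{\mathcal B}=\|2\rho'\|\).

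\item When \(\mathcal B\) is reducible, the sector is a product and the same Laplace computation applies.
\end{enumerate}

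The main obstacle is step (b): getting uniform (not merely asymptotic) two-sided constants in the vertex count, and doing so for non-Bruhat--Tits buildings. For that I would rely on the regularity of thickness along panels in each wall type, which is forced by the existence of a cocompact lattice.

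\emph{Part (ii).} This part is softer.

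\begin{enumerate}[label=(\alph*),leftmargin=2.4em]
\item \(\operatorname{Aut}(\mathcal B)\) acts on a locally finite polyhedral complex, and every element acts by simplicial automorphisms after passing to a bounded-index type-preserving subgroup if necessary.

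\item If \(\gamma\in\Gamma\) is torsion-free and nontrivial, then \(\gamma\) fixes no point. Otherwise it would fix a point, hence lie in a compact stabilizer. Discreteness would then make \(\langle\gamma\rangle\) finite, contradicting torsion-freeness.

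\item So every nontrivial \(\gamma\) is hyperbolic; elliptic elements are excluded by (b) and parabolic ones are excluded by the polyhedral structure. By Bridson's theorem on semisimplicity of cellular isometries of complexes with finitely many shapes, the translation lengths of such \(\gamma\) are bounded below by a constant \(2\varepsilon_{\mathcal B}>0\) depending only on the finitely many cell shapes.

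\item Hence \(d(x,\gamma x)\ge2\varepsilon_{\mathcal B}\) for all \(x\). The distinct translates of \(B(x,\varepsilon_{\mathcal B})\) are therefore disjoint.

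\item Combined with uniqueness of geodesics in the CAT\((0)\) space, this shows that such balls embed isometrically in \(Y\).
\end{enumerate}
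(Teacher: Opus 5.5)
Your part (ii) is the paper's argument: compact point stabilizers plus discreteness exclude elliptic elements, and Bridson's theorem gives the uniform bound $2\varepsilon_{\mathcal B}$ on translation lengths.

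For part (i) your route differs from the paper's. The paper splits $\mathcal B=\mathcal B_1\times\cdots\times\mathcal B_k$ into irreducible factors. It proves uniform shell growth $e^{h_jn}n^{(r_j-1)/2}$ in each factor, then does the $\ell^2$-product computation, which gives the exponent $\sum_j(r_j-1)/2+(k-1)/2=(r-1)/2$. You instead do one Macdonald-type count and one Laplace integral over the whole product Weyl cone. That is viable, and more explicit, since it identifies $h_{\mathcal B}=\|2\rho'\|$ as in the symmetric-space case. But it only works once step (b) holds with uniform constants.

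\textbf{The gap is in step (b).} You justify it by ``regularity of thickness along panels in each wall type, which is forced by the existence of a cocompact lattice.'' This is false for rank-one factors, which the lemma must cover (for example, products of trees).

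A counterexample: let $T$ be the universal cover of $K_4$ with one edge doubled. Every vertex of $T$ has degree $3$ or $4$. A lift of a degree-$4$ vertex has neighbours of both degrees, so vertices of the same type in $T$ have different thickness. Yet $\pi_1$ acts freely and cocompactly on $T$, and $T\times T'$ is a thick rank-$2$ building with a cocompact lattice. So there is no Macdonald/Parkinson product formula for spheres in $T$. In the other direction, for irreducible factors of rank $\ge2$ regularity is automatic (Parkinson's Theorem~2.4) and has nothing to do with lattices.

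\textbf{How to repair it.} You need the paper's separate treatment of tree factors:
\begin{enumerate}
\item Pass to a finite-index subgroup of a cocompact lattice that preserves the factors. Its projection acts on each tree factor with finitely many orbits of oriented edges.
\item Apply a Perron--Frobenius argument to the finite non-backtracking transition matrix on these orbits. This gives $|S_T(x,n)|\asymp e^{h_Tn}$ uniformly in $x$.
\item The count at vectorial distance $\lambda=(\lambda_1,\dots,\lambda_k)$ is the product of the factor counts. So you still get $\asymp e^{\langle2\rho',\lambda\rangle}$ uniformly, with $\rho'$ in the open cone.
\end{enumerate}
For the higher-rank factors, the open-cone claim uses thickness $\ge2$ to give $\langle\rho',\alpha_i^\vee\rangle>0$ for every simple root $\alpha_i$. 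You should state this explicitly, because a boundary maximizing direction would lower the power of $s$. With these points in place, your Laplace step goes through.
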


\begin{proof}
Write
\[
 \mathcal B=\mathcal B_1\times\cdots\times\mathcal B_k
\]
for the product decomposition into irreducible affine factors, and put
\(r_j=\rank\mathcal B_j\), so that
\(\sum_{j=1}^k r_j=r\).  The standard metric is the
\(\ell^2\)-product metric.

After passing to a finite-index subgroup of a fixed cocompact lattice, we
may assume that the factors are preserved.  The projected action on each
factor has finitely many vertex orbits.  In particular, every rank-one
factor is a quasi-transitive locally finite thick tree.

Let \(\mu_j\) denote the measure on \(\mathcal B_j\).  For every
factor there are constants \(h_j,c_j,C_j>0\) such that, uniformly in the
centre \(x_j\) and for every integer \(n\ge1\),
\begin{equation}\label{eq:factor-shell-growth}
 c_j e^{h_jn}(n+1)^{(r_j-1)/2}
 \le
 \mu_j\bigl(B_{\mathcal B_j}(x_j,n+1)
             \setminus B_{\mathcal B_j}(x_j,n)\bigr)
 \le
 C_j e^{h_jn}(n+1)^{(r_j-1)/2}.
\end{equation}
Indeed, if \(r_j\ge2\), then \(\mathcal B_j\) is regular by
\cite[Theorem~2.4]{ParkinsonBuildingsHecke}, and
\cite[Theorem~5.15]{ParkinsonBuildingsHecke} gives
\eqref{eq:factor-shell-growth}.  If \(r_j=1\), then
\(\mathcal B_j\) is a quasi-transitive thick tree.  Looking at the transition matrix on the finitely many
orbits of oriented edges, one gets
\[
 \mu_j\bigl(B_{\mathcal B_j}(x_j,n+1)
             \setminus B_{\mathcal B_j}(x_j,n)\bigr)
 \asymp_{\mathcal B_j}e^{h_jn},
\]
which is precisely \eqref{eq:factor-shell-growth} in rank one.

Put
\[
 h_{\mathcal B}=\left(\sum_{j=1}^k h_j^2\right)^{1/2},
 \qquad
 \alpha_j=\frac{r_j-1}{2}.
\]
After decomposing a product ball into its factors, we get
\[
 v_x(s)
 \asymp_{\mathcal B}
 e^{h_{\mathcal B}s}
 s^{\sum_j\alpha_j+(k-1)/2}
\]
This proves (i), uniformly in the centre.

For (ii), apply the discreteness result for translation lengths of
\cite{BridsonSemisimplicity}.  It gives a constant
\(2\varepsilon_{\mathcal B}>0\) such that every cellular automorphism of
\(\mathcal B\) with positive translation length has translation length at
least \(2\varepsilon_{\mathcal B}\).  If \(\Gamma\) is discrete and
torsion-free, then no nontrivial element of \(\Gamma\) is elliptic: a point
stabilizer in \(\operatorname{Aut}(\mathcal B)\) is compact, so its
intersection with \(\Gamma\) is finite.  Thus every
\(\gamma\in\Gamma\setminus\{1\}\) has translation length at least
\(2\varepsilon_{\mathcal B}\), proving the claim.
\end{proof}

Choose \(c_{\mathcal B}\) sufficiently large and set
\[
 \rho_{\mathcal B}(t)
 =
 t+\frac{1}{h_{\mathcal B}}\log\log t+c_{\mathcal B}
 \qquad (t\ge e).
\]
By \cref{lem:building-geometric-input}, the analogues of
\cref{eq:miss-prob,eq:rho-volume,eq:prox-volume} hold with
\(v_-\) in the lower-volume estimate and \(v_+\) in the upper-volume
estimates.  In particular, after increasing \(c_{\mathcal B}\),
\[
 \exp\left(
   -\frac{v_-(\rho_{\mathcal B}(t))}{v_+(t)}
 \right)
\]
is bounded by an arbitrarily prescribed negative power of \(t\), while
\[
 \frac{v_+(2\rho_{\mathcal B}(t)+O_{\mathcal B}(1))}
      {v_+(t)^2}
 \ll_{\mathcal B}
 t^{(1-r)/2}(\log t)^2.
\]

\begin{proof}[Proof of \cref{intro:buildings}]
Fix
\(
 0<\varepsilon<\frac{1}{10}\varepsilon_{\mathcal B}.
\)
For a compact quotient \(Y=\Gamma\backslash\mathcal B\), choose a maximal
\(\varepsilon\)-separated scaffold.  Its
\(\varepsilon\)-balls cover \(Y\), its
\(\varepsilon/2\)-balls are pairwise disjoint and embedded, and its local
degree at every fixed scale is bounded in terms of \(\mathcal B\) alone.

We first consider the large-injectivity-radius case.  Sample a Poisson point
process of intensity \(v_+(t)^{-1}\), and replace a scaffold ball whenever
there is a Poisson point in its radius-\(\rho_{\mathcal B}(t)\)
neighbourhood.  This is exactly the construction of
\cref{sec:thick-case}, with \(v_-\) used to bound the probability of a
missed scaffold centre and \(v_+\) used in the pair estimates.

When all relevant radii are smaller than \(\Inj(Y)\), every nonempty
intersection in the resulting cover lifts to an intersection of metric
balls in \(\mathcal B\).  Metric balls in a CAT\((0)\) space are convex, so
the cover is good.  The Mecke formula and the preceding volume estimates
give
\[
 \bbE E_t
 \le
 C_{\mathcal B}\mu(Y)
 t^{(1-r)/2}(\log t)^2,
\]
where \(E_t\) is the number of edges of the nerve.  Since
\(\mathcal B\) is contractible and the torsion-free action is free,
\(Y\) is a \(K(\Gamma,1)\).  The nerve lemma and the same graph and Gabber
estimates used earlier therefore give
\[
 d(\Gamma)\le E_t,
 \qquad
 \log|H_1(\Gamma;\mathbb Z)_{\tors}|\le E_t\log3.
\]
Taking \(t=R/4\), where \(R=\Inj(Y)\), gives the corresponding
quantitative large-injectivity-radius estimate.

For the BS-convergent statement, sample the Poisson process only in a thick
region.  A scaffold centre is declared exceptional if it is too close to
the thin part or if its radius-\(\rho_{\mathcal B}(t)\) neighbourhood is
missed.  Retain a fixed small scaffold ball at every exceptional centre and
use the large Poisson balls elsewhere.

The uniform constant \(\varepsilon_{\mathcal B}\) is the only new
small-scale input needed here.  It ensures that intersections consisting
only of retained scaffold balls lift, while intersections containing a
Poisson ball lift because the Poisson centre was chosen in the thick
region.  Thus the resulting cover is again good.

The edge decomposition is the same as in the manifold case.  Poisson--Poisson
edges contribute
\[
 C_{\mathcal B}\mu(Y)t^{(1-r)/2}(\log t)^2.
\]
Missed non-thin centres have polynomially small expected density.  The
disjoint \(\varepsilon/2\)-balls around thin scaffold centres show that
their number is bounded by a constant times the volume of a slightly larger
thin part.  Mixed edges incident to such centres introduce one factor of
\(\log t\).  Consequently, there are constants
\(C_{\mathcal B},b_{\mathcal B},T_{\mathcal B}<\infty\) such that
\begin{equation}\label{eq:building-master-estimate}
 \frac{1}{\mu(Y)}
 \max\left\{
   d(\Gamma),
   \log|H_1(\Gamma;\mathbb Z)_{\tors}|
 \right\}
 \le
 C_{\mathcal B}\left(
   t^{(1-r)/2}(\log t)^2
   +
   \log t\,
   \frac{
     \mu\bigl(Y_{<3\rho_{\mathcal B}(t)+b_{\mathcal B}}\bigr)
   }{\mu(Y)}
 \right)
\end{equation}
for every \(t\ge T_{\mathcal B}\).

Now let \(Y_n=\Gamma_n\backslash\mathcal B\) BS-converge to
\(\mathcal B\).  A diagonal argument provides \(t_n\to\infty\) such that
\[
 \log t_n\,
 \frac{
   \mu\bigl((Y_n)_{<3\rho_{\mathcal B}(t_n)+b_{\mathcal B}}\bigr)
 }{\mu(Y_n)}
 \longrightarrow0.
\]
Applying \eqref{eq:building-master-estimate} with \(t=t_n\), and using
\[
 t_n^{(1-r)/2}(\log t_n)^2\longrightarrow0
\]
because \(r\ge2\), proves \cref{intro:buildings}.
\end{proof}

\bibliographystyle{plain}
\bibliography{references_homology}

\end{document}